\documentclass[11pt,reqno]{amsart}

\usepackage[a4paper,margin=1in]{geometry}
\usepackage{amsmath,amssymb,amsthm,mathtools}
\usepackage{bm}
\usepackage{enumitem}
\usepackage{microtype}
\usepackage{booktabs}
\usepackage{graphicx}
\usepackage{cite}
\usepackage{hyperref}
\hypersetup{colorlinks=true,linkcolor=blue,citecolor=blue,urlcolor=blue}

\numberwithin{equation}{section}

\newtheorem{theorem}{Theorem}[section]
\newtheorem{proposition}[theorem]{Proposition}
\newtheorem{lemma}[theorem]{Lemma}
\newtheorem{corollary}[theorem]{Corollary}
\newtheorem{remark}[theorem]{Remark}

\newtheorem{example}[theorem]{Example}

\newcommand{\R}{\mathbb{R}}
\newcommand{\N}{\mathbb{N}}
\newcommand{\GOE}{\operatorname{GOE}}
\newcommand{\diag}{\operatorname{diag}}

\newcommand{\one}{\mathbf{1}}
\newcommand{\av}[1]{\left\langle #1\right\rangle}

\title[GOE in nonlinear ribosome flow models]{Gain of Entrainment in Ribosome Flow Models with Nonlinear Steric Hindrance: Global Rigidity and Dimension-Induced Gain}

\author{Hui Zhou}
\address{School of Mathematics and Statistics, Hefei Normal University, Hefei, Anhui 230601, China}
\email{zhouh16@mail.ustc.edu.cn}

\subjclass[2020]{34C25, 34D23, 37C65, 92C42, 93C10}
\keywords{Ribosome flow model, gain of entrainment, nonlinear steric hindrance, cooperative tridiagonal system, periodic entrainment, contraction, Jacobi matrix, Riccati recurrence}

\begin{document}

\begin{abstract}
The ribosome flow model (RFM) is a nonlinear compartmental model for unidirectional transport along an mRNA chain.  In the classical RFM, exclusion is represented by the linear free-space factor $1-x$.  We study a nonlinear steric-hindrance RFM in which this factor is replaced by $1-g(x)$, where $g(0)=0$, $g(1)=1$, and $g'>0$.  As a preliminary step, we record the dynamical properties needed for the gain-of-entrainment analysis: under constant positive rates there is a unique globally asymptotically stable equilibrium, while positive jointly periodic rates admit a unique globally attracting periodic solution.  The analysis is then built around an exact weighted edge-dissipation identity for the gain of entrainment (GOE), comparing the mean periodic production rate with the steady production rate corresponding to the averaged controls.  This identity yields a global no-GOE criterion: if $[1-g(x)]^{-1/2}$ is convex, equivalently
$$
3[g'(x)]^2+2[1-g(x)]g''(x)\geq0,
$$
then $\GOE\leq0$ for every dimension, period, forcing amplitude, and collection of positive periodic rates.  In particular, the quadratic hindrance $1-x^2$ admits no positive GOE in any dimension; moreover equality occurs only when all rates share one common positive periodic multiplier, so every genuinely nonuniform modulation strictly lowers the mean production rate.  In one dimension we obtain a sharp classification: global no-GOE for all periodic controls is equivalent to convexity of $x/[1-g(x)]$, while negative curvature at one point permits an explicit positive-GOE construction.  In higher dimensions the second variation is governed by a symmetric Jacobi matrix whose diagonal is determined by the scalar curvature and whose off-diagonal terms encode transport coupling.  We derive an exact Riccati criterion for its positivity, a dimension-uniform barrier condition for heterogeneous chains, and a closed-form critical dimension for homogeneous chains.  An explicit admissible nonlinear hindrance is exhibited for which one- and two-site homogeneous chains admit no positive second-order GOE, whereas a three-site chain has a realizable positive GOE.  Thus nonlinear steric hindrance introduces a genuine dimension-induced gain mechanism absent from the classical linear RFM.  For $g(x)=x$, the identity reduces exactly to the weighted edge-dissipation identity used by Massas and Margaliot \cite{MassasMargaliot2026Cost} to prove that periodic modulation cannot improve the average production rate of the classical RFM; the nonlinear setting reveals the curvature mechanism that determines whether this rigidity persists or fails.
\end{abstract}

\maketitle

\section{Introduction}

Protein translation is one of the central transport processes in gene expression. Ribosomes attach to a messenger RNA (mRNA), move codon by codon in a preferred direction, synthesize a polypeptide chain, and detach after completing translation. The local motion is influenced by initiation factors, codon-dependent elongation, the availability of cognate tRNAs, the finite size of ribosomes, and the structural state of the mRNA. When a ribosome slows down, trailing ribosomes may accumulate behind it, leading to crowding, collisions, and quality-control responses; see, for example, \cite{SchullerGreen2018,FilbeckEtAl2022}. From an engineering viewpoint, the same phenomena are relevant to codon optimization, heterologous expression, and synthetic control of protein production.

A classical stochastic description of ribosome traffic is the totally asymmetric simple exclusion process (TASEP); see \cite{MacDonaldEtAl1968,MacDonaldGibbs1969,BlytheEvans2007,KriecherbauerKrug2010}. Its deterministic mean-field counterpart is the ribosome flow model (RFM), in which $x_i(t)\in[0,1]$ denotes the normalized occupancy of site $i$ and the flow from site $i$ to site $i+1$ is proportional to
$$
\lambda_i(t)x_i(t)(1-x_{i+1}(t)).
$$
The factor $1-x_{i+1}$ expresses the available free space in the downstream site. The RFM has proved mathematically tractable while retaining several important qualitative features of translation. Its stability, entrainment, sensitivity, spectral structure, optimization, site-size effects, extended-particle effects, and networks with shared resources have been studied in \cite{ReuveniEtAl2011,MargaliotTuller2012,MargaliotTuller2013,MargaliotSontagTuller2014,PokerEtAl2014,RavehEtAl2016,ZaraiEtAl2017,BarShalomEtAl2020,GruneEtAl2021,JainEtAl2022,KatzEtAl2022}. Recent work also investigates feedback-induced oscillations and resource-allocation principles in RFM-based translation models \cite{EhrmanEtAl2025,MassasEtAl2026Turnpike}.

From the viewpoint of dynamical systems, translation is also a natural testing ground for questions about the interaction between temporal forcing and transport efficiency. Periodic regulation may coordinate protein production with the cell cycle or with environmental rhythms even when it does not maximize the time-averaged throughput. Thus entrainment and performance are logically distinct properties: the first asks whether the state synchronizes to periodic coefficients, whereas the second asks whether this synchronization increases or decreases the mean terminal flux. The gain-of-entrainment problem isolates precisely this second question.

The standard vacancy factor $1-x$ should be viewed as a soft mean-field closure rather than as a fundamental microscopic law. In actual translation, the effective obstruction experienced by a ribosome may depend on several unresolved microscopic features, including the finite ribosomal footprint, correlations among neighboring occupancies, local crowding and queue formation, ribosome--ribosome interactions, and downstream mRNA structures. Some of these effects are modeled explicitly in richer frameworks: the RFM with extended objects (RFMEO) accounts for the fact that one ribosome covers several codons \cite{ZaraiEtAl2017}; deterministic excluded-flow models with local interactions incorporate nearest-neighbor attraction and repulsion \cite{ZaraiKolomeiskyMargaliot2017}; and mechanistic studies of translation through structured mRNA regions emphasize the role of downstream secondary structure in modulating translocation \cite{XieChen2017}. Ribosome-profiling analyses and related models also indicate that ribosomal interference and crowding can have a substantial effect on local elongation dynamics \cite{DaoDucSong2018}. These observations support the biological interpretation of a nonlinear occupancy-response law.

Motivated by this, we consider the generalized flow law
$$
\lambda_i(t)x_i(t)[1-g(x_{i+1}(t))],
$$
where
$$
g(0)=0,\qquad g(1)=1,\qquad g'(x)>0\quad(0<x<1).
$$
Here $1-g(x)$ should be understood as an \emph{effective nonlinear availability law}: it preserves the nearest-neighbor transport architecture of the classical RFM while allowing unresolved crowding and interaction effects to modify the dependence on downstream occupancy. The quadratic choice $g(x)=x^2$ serves as the simplest nonlinear prototype. It preserves the biologically natural endpoint conditions of full availability at zero occupancy and complete blockage at saturation, while allowing the hindrance strength to vary nonlinearly at intermediate densities. We do not regard this quadratic form as a universal empirical law; rather, it is a minimal nonlinear test case that makes the role of curvature mathematically transparent.

Figure~\ref{fig:nonlinear-hindrance-schematic} summarizes the biological interpretation underlying the generalized model. In panel (A), ribosomes move along the coding region of the mRNA in the $5'\hbox{-to-}3'$ direction; the labels $5'$ and $3'$ indicate the molecular orientation of the mRNA strand. The panel also indicates several mechanisms that are represented explicitly in more detailed descriptions of translation, including the multi-codon ribosomal footprint, local ribosome--ribosome interactions and queues, downstream mRNA secondary structure, and high-density crowding. Panel (B) shows how these effects are compressed here into a one-dimensional nearest-neighbor transport law with effective availability $1-g(x)$. Panel (C) contrasts the classical linear law with nonlinear prototypes. The schematic was created specifically for the present paper and is not adapted from an existing published figure.

\begin{figure}[t]
\centering
\includegraphics[width=0.98\textwidth]{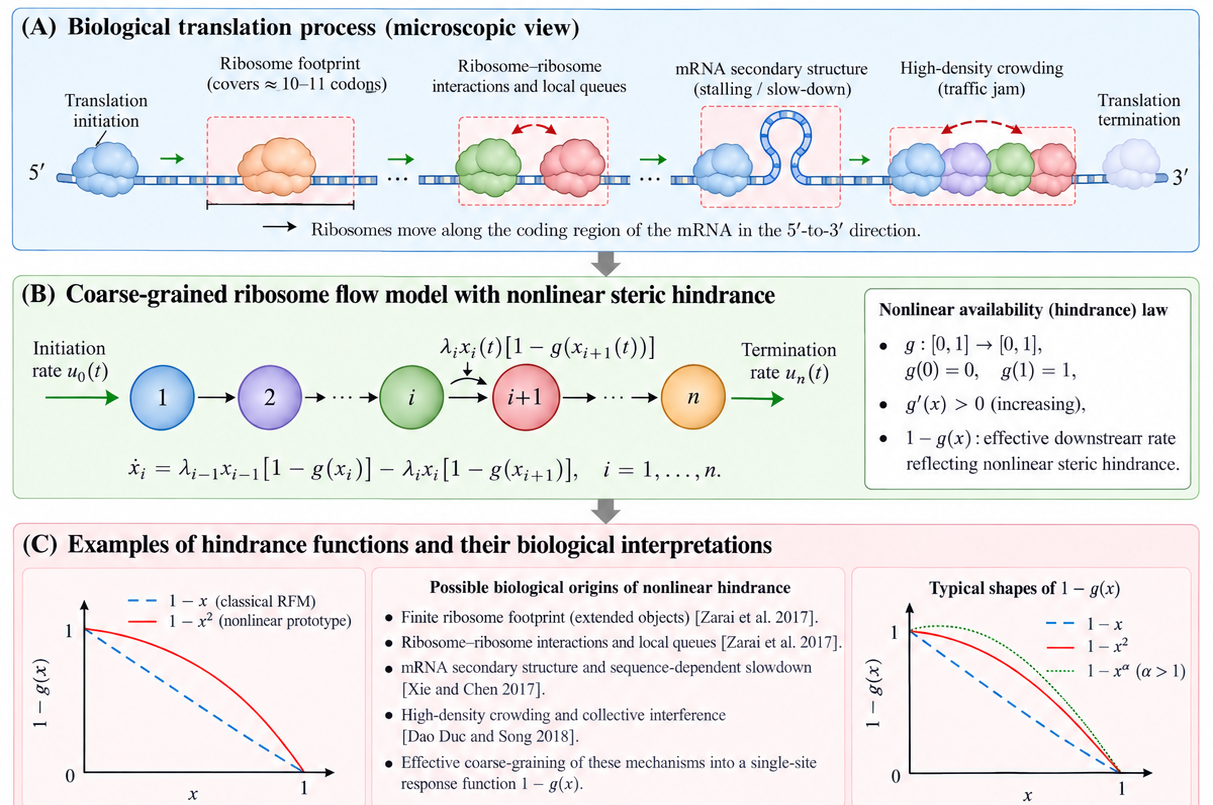}
\caption{From biological translation to a coarse-grained ribosome flow model with nonlinear steric hindrance. In panel (A), ribosomes move along the coding region of the mRNA in the $5'\hbox{-to-}3'$ direction. The panel also illustrates finite ribosome footprint, local ribosome--ribosome interactions and queues, secondary-structure-induced slowdown, and high-density crowding. Panel (B) shows the generalized nearest-neighbor RFM with effective availability $1-g(x_{i+1})$, and panel (C) compares representative hindrance profiles. The figure is an original schematic prepared for this paper and is not adapted from previously published artwork.}
\label{fig:nonlinear-hindrance-schematic}
\end{figure}

There is also a dynamical-systems reason for studying this generalization. The Jacobian of the RFM is tridiagonal and cooperative, and tridiagonal competitive/cooperative systems possess a strong variation-diminishing structure going back to Smillie and Smith \cite{Smillie1984,Smith1991}; related Lyapunov and total-positivity viewpoints were developed in \cite{FiedlerGedeon1999,MargaliotSontag2019}. More generally, the theory of monotone and strongly order-preserving systems provides a robust framework for convergence and recurrent dynamics \cite{Hirsch1982,SmithThieme1991,SmithBook}. The nonlinear law $1-g(x)$ preserves this cooperative nearest-neighbor geometry, so it remains accessible to the same qualitative machinery while moving beyond the bilinear structure of the classical model.

Our main question concerns periodic forcing. Cellular translation is not isolated from time-dependent regulation. Circadian rhythms, the cell-cycle program, periodic resource availability, and engineered oscillatory circuits can modulate initiation or elongation rates. If all transition rates are positive and jointly $T$-periodic, the classical RFM entrains to a unique $T$-periodic orbit \cite{MargaliotSontagTuller2014}. Once entrainment holds, a natural performance question arises: can a periodic modulation increase the \emph{average} protein production rate relative to the constant system obtained by replacing every rate by its temporal average? The difference between these two outputs is the gain of entrainment (GOE).

For the classical RFM, Ofir, Kriecherbauer, Gr\"une, and Margaliot \cite{OfirEtAl2023} established no-GOE results under several structural conditions. Katz, Kriecherbauer, Gr\"une, and Margaliot \cite{KatzEtAl2024} subsequently showed, in a broader weakly contractive bilinear setting, that the first-order GOE always vanishes, so gain is necessarily a second- or higher-order effect. Massas and Margaliot \cite{MassasMargaliot2026Cost} resolve the global problem for the classical linear-hindrance RFM: for arbitrary positive jointly periodic rates, the GOE is nonpositive, and equality occurs precisely when all rates share a common positive periodic modulation. Their proof introduces a weighted edge-dissipation identity and shows that every local dissipation is nonnegative.

In a separate work, Massas and Margaliot \cite{MassasMargaliot2026Linear} study a conceptually different setting in which the state dynamics are stable and linear and all nonlinearity is confined to a static output map. There the leading nonzero GOE is a second-order effect controlled by the Hessian, hence by the curvature, of the output map; convexity or concavity on the controllable subspace determines the global sign, and the gain admits a Bregman-divergence interpretation. This distinction is essential for the present work. In the nonlinear RFM studied here, the measured output remains the terminal transport flux, whereas the nonlinear function $g$ enters directly into every internal edge law and therefore changes the vector field, the periodic orbit, the equilibrium flux balance, and the spatial coupling simultaneously. Consequently, curvature is not merely an output post-processing effect. After linearization it becomes the diagonal potential of a tridiagonal Jacobi form, while conservative nearest-neighbor transport generates the off-diagonal couplings. The competition between these two effects produces a Riccati recurrence and, in particular, a critical chain length beyond which a gain-producing spatial mode may appear. This dimension-induced mechanism has no analogue in the linear-dynamics/nonlinear-output framework of \cite{MassasMargaliot2026Linear}.

The nonlinear-hindrance problem is therefore fundamentally different from both the classical RFM and the stable-linear/nonlinear-output setting. Replacing $1-x$ by $1-g(x)$ preserves the cooperative tridiagonal geometry, but it destroys the bilinear algebra used in the classical RFM and inserts nonlinearity into the transport mechanism itself. More importantly, the global rigidity theorem of Massas and Margaliot \cite{MassasMargaliot2026Cost} is no longer universal: for general increasing $g$, positive GOE can occur. The central purpose of this paper is to identify precisely which nonlinear structure preserves the classical no-GOE mechanism, when scalar curvature alone creates gain, and when gain is created only collectively through spatial coupling.

The results developed below fit into a single structural picture. We begin by recording, in a self-contained form, the nonlinear transport dynamics used later: convergence to a unique equilibrium for constant positive rates and existence of a unique globally attracting periodic orbit under positive jointly periodic forcing. On this dynamical foundation we derive an exact weighted edge-dissipation identity for the GOE. The weights are generated by $\theta(e_i)=e_i g'(e_i)/(1-g(e_i))$, and the classical formula is recovered exactly when $g(x)=x$. The dissipation identity shows that convexity of $W(x)=[1-g(x)]^{-1/2}$ guarantees global no-GOE in every dimension and for arbitrary forcing amplitude. In particular, $g(x)=x^2$ satisfies the strict curvature condition, so the quadratic hindrance $1-x^2$ admits no positive GOE in any dimension, with equality only under a common scalar modulation of all averaged rates. In one dimension the global criterion can be sharpened: no positive GOE for all periodic controls is equivalent to convexity of $\Phi(x)=x/[1-g(x)]$, whereas $\Phi''<0$ at a point permits an explicit gain-producing periodic control with the same mean rates.

In higher dimensions a different feature appears. The small-amplitude GOE is governed by a symmetric Jacobi matrix whose diagonal term $\chi(x)=1+xg'(x)/(1-g(x))+xg''(x)/(2g'(x))$ carries the scalar curvature information, while its off-diagonal entries encode nearest-neighbor transport coupling. The resulting quadratic form admits an exact Riccati characterization of positive definiteness, including a dimension-uniform criterion for heterogeneous chains and a closed-form critical dimension for homogeneous chains. This analysis also yields an explicit admissible nonlinear hindrance for which the one- and two-site homogeneous systems have no positive second-order GOE, while a three-site chain possesses a realizable positive-GOE mode. Thus the same nonlinear transport law may remain rigid at low dimension and acquire gain only through spatial coupling as the chain length increases.

The relationship with the classical no-GOE theorem of Massas and Margaliot \cite{MassasMargaliot2026Cost} is therefore precise. Our dissipation formula contains the classical formula as the special case $g(x)=x$, and the global convexity theorem recovers its no-GOE conclusion. The nonlinear remainder, however, introduces curvature. Depending on that curvature, the classical rigidity may persist globally, fail already in dimension one, or fail only after nearest-neighbor coupling produces a negative Jacobi mode. Thus the present results are not merely a replacement of $x$ by $g(x)$ in the classical proof; they identify the structural boundary between \emph{cost of entrainment} and genuine positive gain.

The paper is organized as follows. Section~\ref{sec:model} introduces the nonlinear RFM and proves the autonomous and periodically forced dynamical results. Section~\ref{sec:dissipation} derives the exact dissipation identity and the global curvature criterion, with the quadratic model as a principal corollary. Section~\ref{sec:scalar} gives the sharp one-dimensional classification. Section~\ref{sec:jacobi} develops the second-variation and Jacobi theory. Section~\ref{sec:riccati} gives the Riccati characterization, uniform heterogeneous criterion, critical-dimension formula, and dimension-induced counterexample. Section~\ref{sec:discussion} discusses the relation to recent GOE theory and several open problems.

\section{The nonlinear RFM and its dynamical foundation}\label{sec:model}

Let
$$
h(x):=1-g(x),\qquad x\in[0,1],
$$
where throughout this section
\begin{equation}
g\in C^1([0,1]),\qquad g(0)=0,\qquad g(1)=1,\qquad g'(x)>0\quad(0<x<1).
\label{eq:2.1}
\end{equation}
Then $h(0)=1$, $h(1)=0$, and $h$ is strictly decreasing on $[0,1]$.

For $n\ge1$, set
$$
x_0(t)\equiv1,\qquad x_{n+1}(t)\equiv0,
$$
and consider
\begin{equation}
\dot x_i=u_{i-1}(t)x_{i-1}h(x_i)-u_i(t)x_i h(x_{i+1}),\qquad i=1,\dots,n.
\label{eq:nlrfm}
\end{equation}
The instantaneous edge flows are
\begin{equation}
q_i(t):=u_i(t)x_i(t)h(x_{i+1}(t)),\qquad i=0,\dots,n,
\label{eq:edgeflow}
\end{equation}
so that
$$
\dot x_i=q_{i-1}-q_i.
$$
In the translation interpretation, $q_n(t)=u_n(t)x_n(t)$ is the instantaneous protein production rate.  Whenever the coefficients are $T$-periodic, we use the notation
$$
\av{f}:=\frac1T\int_0^T f(t)\,dt,
\qquad
\lambda_i:=\av{u_i}=\frac1T\int_0^T u_i(t)\,dt,
\qquad i=0,\dots,n,
$$
so that $\lambda_i$ denotes the time-averaged rate on edge $i$.

We first collect the elementary invariance and persistence properties needed later.

\begin{lemma}[Forward invariance and instantaneous interior entry]\label{lem:invariance}
Assume that every $u_i(t)$ is continuous and strictly positive.  Then $[0,1]^n$ is forward invariant for \eqref{eq:nlrfm}.  Moreover, every solution with initial condition in $[0,1]^n$ satisfies
$$
x(t)\in(0,1)^n\qquad\text{for every }t>0.
$$
\end{lemma}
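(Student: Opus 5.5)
Forward invariance will come from a boundary-inflow check, and interior entry from a propagation argument along the chain. One subtlety must be handled first: $g$ is only $C^1$ on $[0,1]$, so the vector field in \eqref{eq:nlrfm} is locally Lipschitz in $x$ on $[0,1]^n$ and continuous in $t$. To make the invariance argument rigorous, I will extend $h$ to a $C^1$ function on an open neighborhood of $[0,1]$, for instance by linear extrapolation using $h'(0)$ and $h'(1)$. This gives local existence and uniqueness. Global existence on $[0,\infty)$ will then follow once invariance of the compact cube is shown.

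For forward invariance I will use the standard tangency (Nagumo-type) criterion for a closed convex set, applied to the time-dependent field. I check the faces one at a time.
\begin{itemize}
\item On the face $x_i=0$ we have $\dot x_i=u_{i-1}x_{i-1}h(0)=u_{i-1}x_{i-1}\ge0$, since $h(0)=1$ and $x_{i-1}\ge0$. For $i=1$ this reads $\dot x_1=u_0>0$.
\item On the face $x_i=1$ we have $\dot x_i=-u_ix_ih(x_{i+1})=-u_ih(x_{i+1})\le0$, since $h(1)=0$ and $h\ge0$ on $[0,1]$.
\end{itemize}
Because only weak inequalities hold on some faces, I will avoid relying on the tangency criterion alone. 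Instead I will perturb: consider the system with an added inward drift $\varepsilon(\tfrac12-x_i)$. For this perturbed system the field points strictly inward on every face, so its solutions cannot leave the cube. Letting $\varepsilon\to0$ and using continuous dependence on parameters then gives invariance for the original system.

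Interior entry is the main obstacle. The difficulty is that a solution starting on the boundary, e.g.\ at $x=0$, must become positive in every coordinate instantly, for all $t>0$ and not merely for large $t$.

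For positivity I will argue by induction on $i$, using a variation-of-constants bound. Write $\dot x_i=u_{i-1}x_{i-1}h(x_i)-a_i(t)x_i$ with $a_i=u_ih(x_{i+1})\ge0$ bounded on compact time intervals.
\begin{itemize}
\item For $i=1$: as long as $x_1<1$ we have $h(x_1)>0$. Near $x_1=0$ the inflow term $u_0h(x_1)$ is bounded below by a positive constant. Hence $x_1(t)>0$ for all $t>0$: there is immediate positivity for small $t$, and afterwards the zero set cannot be re-entered, since at any first zero $t_1>0$ we would have $\dot x_1(t_1)=u_0>0$, a contradiction.
\item Inductive step: suppose $x_{i-1}(t)>0$ for all $t>0$. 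At any point with $x_i=0$ we get $\dot x_i=u_{i-1}x_{i-1}>0$. This rules out zeros of $x_i$ at times $t>0$. To see that $x_i$ leaves $0$ immediately, I will use the integral form $x_i(t)\ge\int_0^t e^{-\int_s^t a_i}u_{i-1}x_{i-1}h(x_i)\,ds$, where $h(x_i)$ stays near $1$ while $x_i$ is small.
\end{itemize}

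For the upper bound $x_i<1$ I will run the induction backward from $i=n$. The last equation is $\dot x_n=u_{n-1}x_{n-1}h(x_n)-u_nx_n$, whose outflow $u_nx_n$ does not vanish at $x_n=1$, so $x_n<1$ for $t>0$. This is the symmetric argument applied to $1-x_n$, using $h(x_n)\ge0$. Now suppose $x_{i+1}<1$ for $t>0$. Then $h(x_{i+1})>0$, so at $x_i=1$ we get $\dot x_i=-u_ih(x_{i+1})<0$. The same integral-form argument applied to $y_i=1-x_i$, together with the now-established positivity $x_i>0$, shows that $x_i$ enters $(0,1)$ instantly and stays there.
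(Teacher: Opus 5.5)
Your proposal is correct and follows essentially the same route as the paper. The sign of the vector field on each face gives invariance; interior entry then comes from a forward induction from $x_1$ (for $x_i>0$) and a backward induction from $x_n$ (for $x_i<1$), each driven by the strict sign of $\dot x_i$ at a boundary contact. Your $\varepsilon$-drift perturbation and variation-of-constants bounds add rigor the paper omits (it uses the tangency condition and a first-contact argument directly), and they are sound but not strictly needed: once invariance holds, a zero of $x_i$ at some $t_1>0$ with $\dot x_i(t_1)>0$ already forces $x_i<0$ just before $t_1$.
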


\begin{proof}
If $x_i=0$, then
$$
\dot x_i=u_{i-1}(t)x_{i-1}h(0)=u_{i-1}(t)x_{i-1}\ge0,
$$
with the convention $x_0=1$.  If $x_i=1$, then
$$
\dot x_i=-u_i(t)h(x_{i+1})\le0
$$
for $i<n$, while $\dot x_n=-u_n(t)<0$.  Hence the vector field points inward or tangentially on every face of the cube, and $[0,1]^n$ is forward invariant.

We now prove strict interior entry.  Since $\dot x_1=u_0(t)>0$ whenever $x_1=0$, uniqueness implies $x_1(t)>0$ for every $t>0$.  Suppose inductively that $x_{i-1}(t)>0$ for every $t>0$.  If $x_i$ vanished at some positive time, then at its first positive-time zero one would have
$$
\dot x_i=u_{i-1}(t)x_{i-1}(t)>0,
$$
contradicting first contact with the lower boundary.  Thus all components are positive for positive time.

The upper boundary follows backwards.  Since $\dot x_n=-u_n(t)<0$ whenever $x_n=1$, we have $x_n(t)<1$ for $t>0$.  If $x_{i+1}(t)<1$ for all $t>0$ and $x_i$ reached $1$ at a first positive time, then
$$
\dot x_i=-u_i(t)h(x_{i+1}(t))<0,
$$
a contradiction.  Induction from $i=n-1$ down to $1$ completes the proof.
\end{proof}

\subsection{The autonomous system}

Assume first that
$$
u_i(t)\equiv\lambda_i>0,\qquad i=0,\dots,n.
$$
The autonomous system is strongly cooperative and tridiagonal in the interior.  Indeed,
\begin{align}
\frac{\partial f_i}{\partial x_{i-1}}&=\lambda_{i-1}h(x_i)>0,\label{eq:offlower}\\
\frac{\partial f_i}{\partial x_{i+1}}&=\lambda_i x_i g'(x_{i+1})>0,\label{eq:offupper}
\end{align}
for the relevant indices, and all other off-diagonal derivatives vanish.  This places the model in the classical Smillie--Smith framework \cite{Smillie1984,Smith1991,MargaliotSontag2019}.

The corresponding autonomous existence, uniqueness, and global-attractivity result for this nonlinear-hindrance model was also obtained in \cite{Wu2026}. We include a self-contained proof because the structural estimates will be used later in the GOE analysis.

\begin{theorem}[Unique globally attracting equilibrium]\label{thm:autonomous}
For every $\lambda_i>0$, the autonomous nonlinear RFM has a unique equilibrium
$$
e=(e_1,\dots,e_n)\in(0,1)^n.
$$
For every $a\in[0,1]^n$, the corresponding solution satisfies
$$
\lim_{t\to\infty}x(t,a)=e.
$$
\end{theorem}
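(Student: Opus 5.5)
We split the argument into existence and uniqueness of $e$, and global convergence.

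\emph{Existence and uniqueness.} Parametrize equilibria by the common steady flux $R>0$. At an equilibrium all edge flows coincide, $q_0=\dots=q_n=R$. Back-substitution from the exit edge gives
$$
e_n=R/\lambda_n,\qquad e_i=\frac{R}{\lambda_i h(e_{i+1})},\quad i=n-1,\dots,1,
$$
together with the closing condition $\lambda_0 h(e_1)=R$. Define $F(R):=\lambda_0 h(e_1(R))-R$ on the set of $R$ for which the recursion stays in $(0,1)$. By induction downward in $i$, each $e_i(R)$ is continuous and strictly increasing in $R$: $h$ is strictly decreasing, so $h(e_{i+1})$ decreases as $R$ grows, and $e_i=R/(\lambda_i h(e_{i+1}))$ therefore increases. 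Let $R^*$ be the supremum of admissible $R$, i.e. the first value at which some $e_i$ reaches $1$. Then
\begin{itemize}[label={}]
\item $F(0^+)=\lambda_0>0$;
\item as $R\uparrow R^*$, some $e_i\to1$, which forces $h(e_i)\to0$ and hence $e_{i-1}\to$ a value pushing $e_1$ upward.
\end{itemize}
The cleanest way to get a sign change is to argue that $e_1\to1$ as $R\uparrow R^*$. Indeed, if $e_{j}\to1$ then $h(e_j)\to0$, so $e_{j-1}=R/(\lambda_{j-1}h(e_j))$ would blow up unless $e_{j-1}$ had already hit $1$. Hence the first index reaching $1$ is $j=1$, and $F(R^*{}^-)=-R^*<0$. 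Since $F$ is strictly decreasing, it has a unique zero, which gives a unique equilibrium $e\in(0,1)^n$. Lemma~\ref{lem:invariance} excludes equilibria on the boundary, because any boundary point leaves the boundary instantly.

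\emph{Global convergence.} By Lemma~\ref{lem:invariance}, every trajectory enters $(0,1)^n$ for $t>0$, where the system is strongly cooperative with tridiagonal Jacobian by \eqref{eq:offlower}--\eqref{eq:offupper}. On the compact invariant cube, Smillie's theorem for strongly cooperative tridiagonal systems \cite{Smillie1984,Smith1991} gives that every $\omega$-limit set consists of equilibria. Uniqueness of $e$ then yields $x(t,a)\to e$.

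Alternatively, monotonicity alone suffices. Trajectories from $0$ and $\mathbf 1$ are monotone in $t$, since $f(0)\ge0$ and $f(\mathbf 1)\le0$ componentwise. They therefore converge to equilibria, which must equal $e$, and every other trajectory is sandwiched between them. We would present this sandwich argument because it is self-contained: monotonicity of the flow follows from Kamke's condition (nonnegative off-diagonal Jacobian entries on the cube, which also holds on the boundary by \eqref{eq:offlower}--\eqref{eq:offupper} with $\ge$).

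\emph{Main obstacle.} The delicate step is the boundary analysis of the back-substitution recursion: showing that $e_1(R)\to1$ before any other component breaks admissibility, so that $F$ really changes sign. Continuity of $g$ at $1$ together with $h(1)=0$ is what drives this step.
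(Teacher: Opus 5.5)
Your argument is correct, but it is organized differently from the paper's. The paper gets \emph{existence} from Smillie's convergence theorem: every orbit converges to an equilibrium, so one exists. It proves \emph{uniqueness} by taking two hypothetical equilibria with currents $R<\widetilde R$, propagating $e_i<\tilde e_i$ backwards through the flux recursion, and contradicting $\lambda_0h(e_1)=R$.

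You run the same backward recursion as a shooting map $F(R)=\lambda_0h(e_1(R))-R$. Its strict monotonicity is the paper's uniqueness comparison in another guise. The intermediate value theorem then supplies existence, at the cost of the boundary analysis showing $e_1(R)\to1$ as $R\uparrow R^*$. That analysis is sound, but two points should be stated explicitly:
\begin{enumerate}
\item $R^*\le\lambda_n<\infty$.
\item If every limit $\lim_{R\uparrow R^*}e_i(R)$ were below $1$, continuity would keep the recursion admissible beyond $R^*$. So some component must reach $1$, and your blow-up argument forces it to be $e_1$.
\end{enumerate}

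For convergence, your sandwich between $x(t,0)$ and $x(t,\mathbf{1})$ needs only the non-strict Kamke condition on the closed cube, together with the uniqueness already proved. The paper instead invokes Smillie's theorem, which relies on strong cooperativity and the tridiagonal structure in the interior.

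What your route buys is that it is entirely elementary and constructive. The equilibrium is located by a one-dimensional monotone search in $R$, and the extreme trajectories converge monotonically. What the paper's route buys is that it avoids any boundary analysis of the recursion. It also yields convergence of every orbit to the equilibrium set independently of uniqueness, and it fits the Smillie--Smith framework used elsewhere in the paper.
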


\begin{proof}
By Lemma~\ref{lem:invariance}, every orbit is bounded and enters $(0,1)^n$ immediately.  The system is strongly cooperative and tridiagonal there by \eqref{eq:offlower}--\eqref{eq:offupper}.  Smillie's convergence theorem \cite{Smillie1984}, in the $C^1$ form provided by Smith \cite{Smith1991}, implies that every orbit converges to an equilibrium.  In particular, an equilibrium exists.

It remains to prove uniqueness.  At an equilibrium, conservation of flow gives a constant $R>0$ such that
\begin{equation}
\lambda_0h(e_1)=\lambda_1e_1h(e_2)=\cdots=\lambda_{n-1}e_{n-1}h(e_n)=\lambda_ne_n=R.
\label{eq:equilibriumflow}
\end{equation}
Suppose that two equilibria $e$ and $\tilde e$ correspond to currents $R$ and $\widetilde R$, respectively, and assume $R<\widetilde R$.  From the last identity in \eqref{eq:equilibriumflow},
$$
e_n<\tilde e_n.
$$
Working backwards, if $e_{i+1}<\tilde e_{i+1}$ then $h(e_{i+1})>h(\tilde e_{i+1})$, and hence
$$
e_i=\frac{R}{\lambda_i h(e_{i+1})}
<\frac{\widetilde R}{\lambda_i h(\tilde e_{i+1})}=\tilde e_i.
$$
Thus $e_1<\tilde e_1$.  But the first identity in \eqref{eq:equilibriumflow} gives
$$
R=\lambda_0h(e_1)>\lambda_0h(\tilde e_1)=\widetilde R,
$$
contradicting $R<\widetilde R$.  The case $R>\widetilde R$ is symmetric, so the equilibrium is unique.  Since every trajectory converges to an equilibrium, every trajectory converges to $e$.
\end{proof}

\begin{remark}
The proof separates two mechanisms.  Tridiagonal cooperativity gives convergence of every bounded orbit, while the flux recursion \eqref{eq:equilibriumflow} gives uniqueness.  The latter argument only uses strict monotonicity of $g$ and therefore persists for a broad class of nonlinear exclusion laws.
\end{remark}

\subsection{Uniform persistence under time-dependent forcing}

We now assume that the rates are positive and uniformly bounded:
\begin{equation}
0<m\le u_i(t)\le M<\infty,\qquad i=0,\dots,n,
\label{eq:ratebounds}
\end{equation}
for all $t\ge0$.  No periodicity is needed for the next lemma.

\begin{lemma}[Uniform persistence away from the boundary]\label{lem:persistence}
For every $\tau>0$, there exists $\delta=\delta(\tau)\in(0,1/2)$ such that every solution of \eqref{eq:nlrfm} with initial value in $[0,1]^n$ satisfies
\begin{equation}
\delta\le x_i(t)\le1-\delta,
\qquad i=1,\dots,n,
\qquad t\ge\tau.
\end{equation}
\end{lemma}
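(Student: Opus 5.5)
We prove the lower and upper bounds separately, each by a finite induction along the chain. This quantifies the qualitative argument of Lemma~\ref{lem:invariance}. Throughout we use only \eqref{eq:ratebounds}, the invariance $x(t)\in[0,1]^n$, and the facts that $h$ is continuous and strictly decreasing with $h(0)=1$, $h(1)=0$. The constants will depend only on $m$, $M$, $n$, $h$ and $\tau$, never on the initial value. Split $[0,\tau]$ into $2n$ equal subintervals of length $s=\tau/(2n)$. We first run an $n$-step induction for the lower bounds over times $[0,ns]$. We then run an $n$-step induction for the upper bounds over times $[ns,2ns]$, which uses the lower bounds already obtained.

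\emph{Lower bounds.} We show inductively that there are $\delta_1\ge\delta_2\ge\dots\ge\delta_n>0$ with $x_i(t)\ge\delta_i$ for all $t\ge is$.

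For $i=1$, write $\dot x_1=u_0h(x_1)-u_1x_1h(x_2)\ge m\,h(x_1)-Mx_1$. Let $c_1\in(0,1)$ be the unique root of $m\,h(c)=Mc$. By comparison with the scalar autonomous ODE $\dot y=m h(y)-My$ started at $y(0)=0$, we have $x_1(t)\ge y(t)$. The function $y$ is increasing and tends to $c_1$. Hence $x_1(t)\ge \delta_1:=y(s)>0$ for $t\ge s$. The bound persists for all later times, because the set $\{x_1\ge\delta_1\}$ is forward invariant: at $x_1=\delta_1<c_1$ we have $\dot x_1>0$.

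For the inductive step, assume $x_{i-1}\ge\delta_{i-1}$ for $t\ge(i-1)s$. Then for those times
\[
\dot x_i\ge m\delta_{i-1}h(x_i)-Mx_i ,
\]
and the same comparison argument, started at time $(i-1)s$ from $0$, gives $\delta_i>0$ valid for $t\ge is$.

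\emph{Upper bounds.} We proceed backwards from the exit end, where the outflow of the last site is linear. We have $\dot x_n=u_{n-1}x_{n-1}h(x_n)-u_nx_n\le M h(x_n)-m x_n$. Comparison with $\dot z=Mh(z)-mz$, started at $z=1$, gives $z$ decreasing towards the root $d_n<1$. Hence $x_n\le 1-\eta_n:=z(s)<1$ after one more interval.

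For the inductive step, suppose $x_{i+1}\le 1-\eta_{i+1}$. Then $h(x_{i+1})\ge h(1-\eta_{i+1})=:\kappa>0$, so
\[
\dot x_i\le Mh(x_i)-m\kappa x_i ,
\]
and the same argument yields $\eta_i>0$. Note that the lower bounds are not actually needed here, so the upper-bound induction could also run on $[0,ns]$. Finally take $\delta=\min(\delta_n,\eta_1,\dots,\eta_n,1/4)$.

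\emph{Main obstacle.} The only delicate point is justifying the scalar comparison when the coefficient depends on $t$ through $x_{i\pm1}(t)$. We handle this by using only the one-sided differential inequality together with a standard comparison lemma for scalar ODEs; the right-hand side $y\mapsto a h(y)-by$ is $C^1$, hence locally Lipschitz. Alternatively, we can avoid comparison entirely with a barrier argument. As long as $x_i<c$, the derivative is bounded below by a fixed $\beta=m\delta_{i-1}h(c)-Mc>0$, where $c<c_i$. Hence $x_i$ must exceed $\min(c,\beta s)$ within time $s$, and the sublevel set $\{x_i\ge\min(c,\beta s)\}$ is never left afterwards. We would write the proof with this barrier version, since it gives explicit constants.
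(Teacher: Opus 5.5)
Your proof is correct and follows essentially the same route as the paper: a left-to-right boundary-repulsion induction for the lower bounds, and a right-to-left induction starting from the linear exit edge for the upper bounds. The paper phrases the upper bounds through the reflected variables $y_k=1-x_{n+1-k}$ and runs both inductions on $n$ subintervals of length $\tau/n$, consistent with your remark that the lower bounds are not needed for the upper ones. Your barrier version, with the explicit entry level $\min(c,\beta s)$, spells out the ``threshold is reached within one subinterval'' step that the paper leaves implicit.
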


\begin{proof}
We verify lower and upper boundary repulsion and then use a finite induction along the chain.

For the lower boundary, choose $\Delta>0$ small.  If $x_1\le\Delta$, then
$$
\dot x_1
=u_0h(x_1)-u_1x_1h(x_2)
\ge m h(\Delta)-M\Delta.
$$
Since $h(0)=1$, the right-hand side is positive for all sufficiently small $\Delta$.  For $i\ge2$, if $x_i\le\Delta$ and $x_{i-1}\ge\eta>0$, then
$$
\dot x_i
\ge m\eta h(\Delta)-M\Delta,
$$
which is again positive for sufficiently small $\Delta$, now depending on $\eta$.  Starting with $x_1$ and proceeding inductively from left to right, one obtains, on consecutive subintervals of length $\tau/n$, positive lower bounds for $x_1$, then for $x_2$, and so on.  A first-contact argument shows that once the corresponding lower threshold is reached it cannot be crossed downwards.  Hence there exists $\delta_->0$ such that
$$
x_i(t)\ge\delta_-,\qquad t\ge\tau.
$$
This is the standard boundary-repulsion induction used for periodic RFM systems; see \cite{MargaliotSontagTuller2014}.

For completeness, the upper boundary can be treated without invoking a symmetry of the equation.  Define
$$
y_k:=1-x_{n+1-k},\qquad k=1,\dots,n.
$$
If $y_1\le\Delta$, then $x_n\ge1-\Delta$, and
$$
\dot y_1=-\dot x_n
=u_nx_n-u_{n-1}x_{n-1}h(x_n)
\ge m(1-\Delta)-Mh(1-\Delta),
$$
which is positive for sufficiently small $\Delta$ because $h(1)=0$.  If $k\ge2$, $y_k\le\Delta$, and $y_{k-1}\ge\eta$, then with $j=n+1-k$ we have $x_j\ge1-\Delta$ and $x_{j+1}\le1-\eta$, so
$$
\dot y_k=-\dot x_j
=u_jx_jh(x_{j+1})-u_{j-1}x_{j-1}h(x_j)
\ge m(1-\Delta)h(1-\eta)-Mh(1-\Delta).
$$
This is positive for sufficiently small $\Delta$.  The same induction now yields $y_k(t)\ge\delta_+>0$ for $t\ge\tau$, i.e.
$$
x_i(t)\le1-\delta_+.
$$
Taking $\delta=\min\{\delta_-,\delta_+\}$ proves the claim.
\end{proof}

The next result is the key strengthening of mere nonexpansiveness.  On an interior compact set the linearized transport is irreducible, and the leaks at the entrance and exit force a strict loss of $\ell^1$ mass over every positive time interval.

\begin{lemma}[Finite-time strict contraction on interior strips]\label{lem:strictcontraction}
Fix $\delta\in(0,1/2)$ and $\tau>0$.  Let
$$
K_\delta=[\delta,1-\delta]^n.
$$
There exists $q=q(\delta,\tau)\in(0,1)$ with the following property.  If two solutions $x(t)$ and $y(t)$ of \eqref{eq:nlrfm} remain in $K_\delta$ for $t\in[s,s+\tau]$, then
\begin{equation}
\|x(s+\tau)-y(s+\tau)\|_1
\le q\|x(s)-y(s)\|_1.
\end{equation}
\end{lemma}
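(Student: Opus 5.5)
The approach is to write the difference $z=x-y$ as the solution of a linear time-varying system, using the mean-value theorem along the segment between $x(t)$ and $y(t)$. Since $K_\delta$ is convex, this segment stays in $K_\delta$, so every coefficient can be bounded in terms of $\delta$, $m$, $M$. Concretely, $\dot z=J(t)z$ with
$$
J(t)=\int_0^1 Df\bigl(t,\,y(t)+r(x(t)-y(t))\bigr)\,dr .
$$
The matrix $J(t)$ is tridiagonal and Metzler. By \eqref{eq:offlower}--\eqref{eq:offupper}, and assuming $g\in C^1$ with $g'>0$ on $[\delta,1-\delta]$ (hence $g'\ge c_\delta>0$ there by compactness), its off-diagonal entries satisfy
$$
J_{i,i-1}\ge m\,h(1-\delta)>0,\qquad J_{i,i+1}\ge m\,\delta\, c_\delta>0 .
$$
Its column sums vanish except in the first and last columns. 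The first column sum is $-\bar u_0 g'(\cdot)\le -m c_\delta$, coming from the inflow term $u_0h(x_1)$. The last column sum is $-u_n\le -m$. Thus the $\ell^1$ column measure is $\mu_1(J)\le0$, giving nonexpansiveness, with a strict leak at the two ends.

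Next I represent $z(s+\tau)=\Phi(s+\tau,s)z(s)$, where $\Phi$ is the transition matrix of $\dot z=J(t)z$. Because $J$ is Metzler with uniformly bounded diagonal, $\|J_{ii}\|\le L(\delta)$, the standard Peano--Baker argument applies. Write $J=-LI+P(t)$ with $P\ge0$; then $\Phi\ge e^{-L\tau}\exp$-type expansions in $P$. Each term is entrywise nonnegative, and the $k$-fold products contain the path $i\to i\pm1\to\cdots\to j$. Together with the uniform lower bounds on the sub- and superdiagonal, this gives a uniform entrywise positivity bound $\Phi_{ij}(s+\tau,s)\ge\varepsilon(\delta,\tau)>0$ for all $i,j$.

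The column sums of $\Phi$ are $\sigma_j:=\sum_i\Phi_{ij}=\one^\top\Phi e_j$. They satisfy
$$
\frac{d}{dt}\,\one^\top\Phi=\one^\top J\,\Phi=-\bigl(a_1(t)\,e_1^\top+a_n(t)\,e_n^\top\bigr)\Phi,
$$
where $a_1\ge mc_\delta$ and $a_n\ge m$. Integrating over $[s,s+\tau]$ gives
$$
\sigma_j\le 1-\int_s^{s+\tau} a_n(t)\,\Phi_{nj}(t,s)\,dt .
$$
Using the entrywise lower bound on $[s+\tau/2,\,t]$ (or its analogue for $\Phi(t,s)$ with $t-s\ge\tau/2$), this yields $\sigma_j\le 1-m(\tau/2)\varepsilon(\delta,\tau/2)=:q<1$. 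Finally, since $\Phi\ge0$,
$$
\|\Phi z\|_1\le\sum_j\sigma_j|z_j|\le q\,\|z\|_1 .
$$

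The main obstacle I expect is making the entrywise positivity bound uniform and honest: it must depend only on $\delta$, $\tau$, $m$, $M$. This means controlling $\Phi_{nj}(t,s)$ from below uniformly for $t$ in a subinterval, which requires a careful lower bound of the Peano--Baker series by its $|n-j|$-th term times $e^{-L(t-s)}$. The other point needing care is the regularity assumption on $g'$ near the strip, needed so that the superdiagonal and first-column leak bounds are strictly positive.
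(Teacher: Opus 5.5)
Your proposal is correct and follows the paper's proof essentially step by step: mean-value linearization on the convex strip, Metzler tridiagonal structure with vanishing interior column sums, nonnegativity of the transition matrix, a Peano--Baker lower bound on one row of $\Phi(t,s)$ for $t-s\in[\tau/2,\tau]$, and integration of the column-sum equation. The only difference is that you take the strict decrement from the exit leak $u_n\ge m$ and the subdiagonal path $j\to j+1\to\cdots\to n$, whereas the paper uses the entrance leak $u_0g'(x_1)\ge m\min_{[\delta,1-\delta]}g'$ and the superdiagonal path to row $1$; as a result, your route does not actually need the lower bound on $g'$ that you flag as a point of care, only $h(1-\delta)>0$.
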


\begin{proof}
Set $z=x-y$.  By the integral mean-value formula,
$$
\dot z=A(t)z,
\qquad
A(t)=\int_0^1J\bigl(t,y(t)+\sigma z(t)\bigr)\,d\sigma,
$$
where $J=D_xf$ is the Jacobian of \eqref{eq:nlrfm}.  Since $K_\delta$ is convex, all intermediate points lie in $K_\delta$.

The matrix $A(t)$ is Metzler and tridiagonal.  Its nonzero off-diagonal entries are bounded below by a constant $a=a(\delta)>0$.  Indeed, on $K_\delta$,
$$
u_{i-1}h(x_i)\ge m h(1-\delta)>0,
$$
and
$$
u_i x_i g'(x_{i+1})\ge m\delta\min_{[\delta,1-\delta]}g'>0.
$$
Also, all entries of $A(t)$ are uniformly bounded in absolute value by a constant $B>0$.

A direct computation of column sums gives
$$
\one^TA(t)=(-d_1(t),0,\dots,0,-d_n(t)),
$$
where
$$
d_1(t)\ge d:=m\min_{[\delta,1-\delta]}g'>0,
\qquad
d_n(t)\ge m.
$$
After replacing $d$ by $\min\{d,m\}$ we may assume both endpoint leaks are at least $d$.

Let $\Psi(t,s)$ be the transition matrix of $\dot z=A(t)z$.  Since $A$ is Metzler, $\Psi(t,s)\ge0$.  For the sum of the entries in the $j$th column,
$$
\sigma_j(t):=\one^T\Psi(t,s)e_j,
$$
we obtain
\begin{equation}
\dot\sigma_j(t)
=\one^TA(t)\Psi(t,s)e_j
\le-d\Psi_{1j}(t,s)\le0.
\end{equation}
Thus $\|\Psi(t,s)\|_1\le1$.

We now quantify strictness.  Since $A(t)+BI$ is nonnegative and every nearest-neighbor off-diagonal entry is at least $a$, the Peano--Baker expansion gives, for $t>s$,
$$
\Psi_{1j}(t,s)
\ge e^{-B(t-s)}\frac{a^{j-1}(t-s)^{j-1}}{(j-1)!}.
$$
Consequently, for $t-s\in[\tau/2,\tau]$,
$$
\Psi_{1j}(t,s)\ge\kappa,
$$
where
$$
\kappa:=e^{-B\tau}
\min_{1\le j\le n}
\frac{a^{j-1}(\tau/2)^{j-1}}{(j-1)!}>0.
$$
Integrating (2.9) over $[s+\tau/2,s+\tau]$ yields
$$
\sigma_j(s+\tau)
\le1-\frac{d\kappa\tau}{2}.
$$
Replacing the decrement by its minimum with $1/2$ if necessary, there is a number $q\in(0,1)$, independent of $j$, such that
$$
\max_j\sigma_j(s+\tau)=\|\Psi(s+\tau,s)\|_1\le q.
$$
Therefore
$$
\|z(s+\tau)\|_1\le q\|z(s)\|_1,
$$
which is (2.8).
\end{proof}

\subsection{Periodic forcing}

Assume now that each $u_i$ is continuous, positive, and $T$-periodic, and that \eqref{eq:ratebounds} holds.

The existence, uniqueness, and global attractivity of the periodic solution for the same nonlinear-hindrance class were also established in \cite{Wu2026}. The proof below is retained in order to make explicit the persistence and finite-time contraction estimates needed subsequently.

\begin{theorem}[Unique globally attracting periodic solution]\label{thm:periodic}
System \eqref{eq:nlrfm} admits a unique $T$-periodic solution
$$
\gamma(t)\in(0,1)^n.
$$
For every $a\in[0,1]^n$,
\begin{equation}
\lim_{t\to\infty}\|x(t,0,a)-\gamma(t)\|_1=0.
\label{eq:2.10}
\end{equation}
In fact, convergence is exponential after an arbitrary positive transient.
\end{theorem}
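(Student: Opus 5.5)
We will obtain $\gamma$ as the fixed point of the period map and derive attractivity from the two preceding lemmas. Let $P:[0,1]^n\to[0,1]^n$ be the Poincaré map $P(a)=x(T,0,a)$. It is well defined and continuous by Lemma~\ref{lem:invariance} and continuous dependence on initial data. By $T$-periodicity of the coefficients, $x(t+kT,0,a)=x(t,0,P^k(a))$. Periodic solutions are then exactly orbits through fixed points of $P$.

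\emph{Step 1: a contracting iterate.} Fix $\tau\in(0,T]$ and take $\delta=\delta(\tau)$ from Lemma~\ref{lem:persistence}. Every solution starting in $[0,1]^n$ at time $0$ lies in $K_\delta$ for all $t\ge\tau$. Set $D:=P([0,1]^n)\subset K_\delta$. This set is compact, and it is invariant under $P$ because $[0,1]^n$ is. For $a,b\in D$, the two solutions through $a$ and $b$ remain in $K_\delta$ on $[0,\infty)$. Indeed, they equal $x(t+T,0,a')$ and $x(t+T,0,b')$ with $a',b'\in[0,1]^n$, and $t+T\ge\tau$. Lemma~\ref{lem:strictcontraction} with $s=0$ and window length $T$ (taking $q=q(\delta,T)$) then gives $\|P(a)-P(b)\|_1\le q\|a-b\|_1$. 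So $P$ is a strict contraction of the complete metric space $(D,\|\cdot\|_1)$. Banach's theorem yields a unique fixed point $p\in D$. Then $\gamma(t):=x(t,0,p)$ is $T$-periodic and lies in $(0,1)^n$ by Lemma~\ref{lem:invariance}.

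\emph{Step 2: uniqueness.} Any other $T$-periodic solution in $[0,1]^n$ has an initial value $\tilde p$ that is fixed by $P$. Hence $\tilde p=P(\tilde p)\in D$, and so $\tilde p=p$. (Periodic solutions leaving the cube are excluded, since we work on the invariant cube.)

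\emph{Step 3: exponential attractivity.} For $a\in[0,1]^n$ put $a_1=P(a)\in D$. Then $\|P^k(a)-p\|_1\le q^{k-1}\|a_1-p\|_1\le q^{k-1}n$. Next we control intermediate times. Write $t=kT+r$ with $r\in[0,T)$. Both $x(t,0,a)$ and $\gamma(t)$ lie in $K_\delta$ once $t\ge T$. On $K_\delta$, the $\ell^1$ distance is nonincreasing, because $\|\Psi(t,s)\|_1\le1$ in the proof of Lemma~\ref{lem:strictcontraction}. Hence
$$
\|x(t,0,a)-\gamma(t)\|_1\le\|P^k(a)-p\|_1\le n\,q^{k-1}.
$$
This gives \eqref{eq:2.10}, with a rate $Ce^{-\mu t}$ where $\mu=-\ln q/T$, uniformly in $a$. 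The transient restriction $t\ge T$ can be replaced by any $t\ge\tau'>0$ by rerunning the argument with window $\tau'$.

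The main point to check carefully is Step 1: that solutions starting from $D$ stay in $K_\delta$ on the whole window, so that Lemma~\ref{lem:strictcontraction} applies. This holds because Lemma~\ref{lem:persistence} is uniform over all initial data in $[0,1]^n$ and all initial times. Time-invariance of the bounds \eqref{eq:ratebounds} lets us apply it from any starting time.
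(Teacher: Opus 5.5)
Your proof is correct, and it reorganizes the paper's argument in a different way. The paper proves existence with Brouwer's fixed-point theorem applied to $P$ on $[0,1]^n$. It then uses Lemma~\ref{lem:strictcontraction} only for uniqueness, placing two putative periodic orbits in a common strip $K_\delta$ via Lemma~\ref{lem:invariance} and compactness of their images. Uniform persistence (Lemma~\ref{lem:persistence}) enters the paper's proof only in the attractivity step.

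You instead invoke Lemma~\ref{lem:persistence} at the outset. It traps $D=P([0,1]^n)$, together with every forward orbit issuing from it, inside a single strip, so $P|_D$ is a strict $\ell^1$ contraction. Banach's theorem then gives existence, uniqueness, and convergence of the iterates $P^k(a)$ all at once. Your intermediate-time estimate adds an explicit bound $\|x(t,0,a)-\gamma(t)\|_1\le n\,q^{\lfloor t/T\rfloor-1}$, uniform in $a$ and valid for all $t\ge0$.

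Your route buys economy and constructiveness: no topological fixed-point theorem is needed, and the rate is explicit from the start. The paper's route separates the ingredients. Its existence step uses only forward invariance and continuity of $P$, and its uniqueness step uses only instantaneous interior entry. Those two conclusions would therefore survive in a setting where no uniform persistence estimate were available, though both proofs need Lemma~\ref{lem:persistence} for global attractivity.
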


\begin{proof}
Let $P:[0,1]^n\to[0,1]^n$ be the time-$T$ map, $P(a)=x(T,0,a)$.  Lemma~\ref{lem:invariance} implies that $P$ is well defined, and standard continuous dependence gives continuity.  Brouwer's fixed-point theorem yields $a_*$ with $P(a_*)=a_*$.  The corresponding solution is $T$-periodic.

Suppose $\gamma$ and $\widetilde\gamma$ are two $T$-periodic solutions.  By compactness of their images and Lemma~\ref{lem:invariance}, there exists $\delta>0$ such that both trajectories and every line segment joining them lie in $K_\delta$.  Applying Lemma~\ref{lem:strictcontraction} with $\tau=T$ gives
$$
\|\gamma(T)-\widetilde\gamma(T)\|_1
\le q\|\gamma(0)-\widetilde\gamma(0)\|_1
$$
for some $q<1$.  Periodicity makes the left- and right-hand norms equal, so the norm must vanish.  Hence the periodic solution is unique.

Finally, fix an arbitrary solution $x(t)$.  By Lemma~\ref{lem:persistence}, after any prescribed positive transient both $x(t)$ and $\gamma(t)$ remain in a common strip $K_\delta$.  Applying Lemma~\ref{lem:strictcontraction} successively over intervals of length $T$ yields geometric decay at the discrete times $t_0+kT$.  The nonexpansive estimate $\|\Psi(t,s)\|_1\le1$ between these times gives \eqref{eq:2.10} for all $t$ and the asserted exponential convergence after the transient.
\end{proof}

\begin{corollary}[Common scalar modulation]\label{cor:timescale}
Suppose
$$
u_i(t)=m(t)\lambda_i,
\qquad i=0,\dots,n,
$$
where $m$ is positive and $T$-periodic and the $\lambda_i$ are positive constants.  Then the unique periodic solution is the constant equilibrium of the autonomous system with rates $\lambda_i$.
\end{corollary}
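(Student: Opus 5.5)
The plan is to check directly that the constant function $x(t)\equiv e$ solves the periodically forced system. Here $e\in(0,1)^n$ is the unique equilibrium of the autonomous system with rates $\lambda_i$, given by Theorem~\ref{thm:autonomous}. Once that is done, the uniqueness part of Theorem~\ref{thm:periodic} finishes the argument.

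First I substitute $u_i(t)=m(t)\lambda_i$ into \eqref{eq:nlrfm}. Since every edge flow \eqref{eq:edgeflow} is linear in its rate, the vector field factors as
$$
f_i(t,x)=m(t)\bigl[\lambda_{i-1}x_{i-1}h(x_i)-\lambda_i x_i h(x_{i+1})\bigr]=m(t)F_i(x),
$$
where $F$ is the autonomous vector field with rates $\lambda_i$. At $x=e$ we have $F(e)=0$, because the common-current relations \eqref{eq:equilibriumflow} hold. Hence $f(t,e)=0$ for all $t$, and the constant $x(t)\equiv e$ is a solution. A constant function is trivially $T$-periodic, and it lies in $(0,1)^n$.

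Next, the hypotheses of Theorem~\ref{thm:periodic} hold. The function $m$ is continuous, positive and periodic, so it is bounded above and below by positive constants. Therefore \eqref{eq:ratebounds} holds with $m_{\min}\min_i\lambda_i$ as the lower bound and $m_{\max}\max_i\lambda_i$ as the upper bound. Theorem~\ref{thm:periodic} then says the $T$-periodic solution is unique, so $\gamma\equiv e$.

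Continuity of $m$ is implicitly assumed, as in the setting of Theorem~\ref{thm:periodic}. An alternative route is a time reparametrization $s=\int_0^t m$, which reduces the forced system to the autonomous one. This is not needed, since the direct verification above suffices.

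One point is worth noting. The averaged rates are $\av{u_i}=\av{m}\lambda_i$, not $\lambda_i$ itself. The equilibrium for the rates $\av{m}\lambda_i$ is nevertheless the same point $e$, because scaling all rates by a common constant multiplies $F$ by that constant and leaves its zero set unchanged. This is the form in which the corollary will be used for the GOE equality case, where the periodic production rate is $\av{m}\lambda_n e_n$, equal to the steady production rate for the averaged rates. Nothing in the argument is a genuine obstacle; the only thing to check is that the hypotheses of Theorem~\ref{thm:periodic} are met.
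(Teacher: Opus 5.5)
Your proof is correct and follows the same logic as the paper's: exhibit $e$ as a $T$-periodic solution of the forced system, then conclude by the uniqueness part of Theorem~\ref{thm:periodic}. The paper obtains the first step through the time change $s=\int_0^t m(r)\,dr$, whereas your factorization $f(t,x)=m(t)F(x)$ with $F(e)=0$ gets it by direct substitution; this is slightly more elementary and avoids the paper's appeal to $e$ being the unique bounded entire recurrent state of the autonomous flow.
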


\begin{proof}
With the new time variable
$$
s(t)=\int_0^tm(r)\,dr,
$$
we have $ds/dt=m(t)>0$, and \eqref{eq:nlrfm} becomes the autonomous nonlinear RFM with constant rates $\lambda_i$.  By Theorem~\ref{thm:autonomous}, its unique bounded entire recurrent state is the equilibrium.  Theorem~\ref{thm:periodic} then identifies this equilibrium as the unique periodic solution in the original time variable.
\end{proof}

\begin{remark}
Theorem~\ref{thm:periodic} is consistent with the classical periodic tridiagonal theory of Smith \cite{Smith1991}, which shows that bounded solutions of periodic cooperative tridiagonal systems are asymptotic to periodic solutions.  The additional transport structure of \eqref{eq:nlrfm} gives uniqueness: the raw $\ell^1$ logarithmic norm is nonpositive because all internal Jacobian column sums vanish, and irreducible transport carries mass to the endpoint leaks, producing strict contraction over every positive finite time interval.
\end{remark}

\section{Exact dissipation and a global no-GOE theorem}\label{sec:dissipation}

We now assume that the rates $u_i$ are positive and $T$-periodic.  Recall that $\lambda_i=\av{u_i}$ denotes the averaged rate, and let $e=(e_1,\dots,e_n)$ be the unique equilibrium of the autonomous system with rates $\lambda_i$.  Set
$$
e_0:=1,\qquad e_{n+1}:=0.
$$
Define the equilibrium edge factors
\begin{equation}
c_i:=e_i h(e_{i+1}),\qquad i=0,\dots,n.
\label{eq:ci}
\end{equation}
The equilibrium flow balance gives
\begin{equation}
R_C:=\lambda_nc_n=\lambda_i c_i,\qquad i=0,\dots,n.
\label{eq:RC}
\end{equation}
Let $\gamma(t)$ be the unique periodic solution of Theorem~\ref{thm:periodic}.  Along this orbit define
$$
q_i(t):=u_i(t)\gamma_i(t)h(\gamma_{i+1}(t)),
$$
with $\gamma_0\equiv1$ and $\gamma_{n+1}\equiv0$.  Averaging
$$
\dot\gamma_i=q_{i-1}-q_i
$$
over one period yields
\begin{equation}
\av{q_0}=\cdots=\av{q_n}=:R_P.
\label{eq:RPbalance}
\end{equation}
The gain of entrainment is
\begin{equation}
\GOE(u):=R_P-R_C.
\label{eq:GOEdef}
\end{equation}

\subsection{The correct nonlinear weights}

Set
$$
z_i(t):=\gamma_i(t)-e_i,
\qquad z_0=z_{n+1}=0.
$$
The first variation of the $i$th edge factor $x_i h(x_{i+1})$ at $(e_i,e_{i+1})$ is
\begin{equation}
\eta_i:=h(e_{i+1})z_i-e_i g'(e_{i+1})z_{i+1}.
\label{eq:eta}
\end{equation}
Define
\begin{equation}
\theta(x):=\frac{xg'(x)}{h(x)}
=\frac{xg'(x)}{1-g(x)},\qquad x\in(0,1),
\label{eq:theta}
\end{equation}
and recursively choose positive weights
\begin{equation}
p_i=p_{i-1}\theta(e_i),\qquad i=1,\dots,n,
\label{eq:weights}
\end{equation}
with $p_0>0$ normalized by
$$
\sum_{i=0}^np_i=1.
$$
Finally, define the nonlinear Taylor remainder of the edge factor by
\begin{equation}
\rho_i:=\gamma_i h(\gamma_{i+1})-c_i-\eta_i.
\label{eq:rho}
\end{equation}

\begin{lemma}[Telescoping identity]\label{lem:telescoping}
For every $t$,
\begin{equation}
\sum_{i=0}^n\frac{p_i}{c_i}\eta_i(t)=0.
\label{eq:telescoping}
\end{equation}
\end{lemma}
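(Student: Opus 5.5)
The plan is a direct algebraic verification: divide each $\eta_i$ by $c_i$, split the sum into two sums, reindex the second one, and observe that the recursion \eqref{eq:weights} makes the coefficient of every $z_j$ vanish. No dynamics is involved. The identity holds pointwise in $t$ and uses only the definitions \eqref{eq:ci}, \eqref{eq:eta}, \eqref{eq:theta} and \eqref{eq:weights}, together with the boundary conventions $z_0=z_{n+1}=0$.

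\textbf{Step 1 (normalise each term).} For $0\le i\le n$ we have $c_i=e_ih(e_{i+1})>0$; this uses $e_i\in(0,1)$ for $1\le i\le n$ by Theorem~\ref{thm:autonomous}, $e_0=1$, and $h(e_{n+1})=h(0)=1$. Dividing \eqref{eq:eta} by $c_i$ gives
\begin{equation*}
\frac{\eta_i}{c_i}=\frac{z_i}{e_i}-\frac{g'(e_{i+1})}{h(e_{i+1})}\,z_{i+1}.
\end{equation*}
At $i=0$ the first term vanishes because $z_0=0$. At $i=n$ the second term vanishes because $z_{n+1}=0$. Here $g'(0)$ is finite since $g\in C^1([0,1])$, so this term is well defined before it is killed.

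\textbf{Step 2 (reindex).} Multiplying by $p_i$ and summing, I shift the index $j=i+1$ in the second sum:
\begin{equation*}
\sum_{i=0}^n\frac{p_i}{c_i}\eta_i
=\sum_{j=1}^{n}\frac{p_j}{e_j}z_j-\sum_{j=1}^{n}p_{j-1}\frac{g'(e_j)}{h(e_j)}z_j
=\sum_{j=1}^n\frac{z_j}{e_j}\bigl(p_j-p_{j-1}\theta(e_j)\bigr).
\end{equation*}
This uses $\theta(e_j)=e_jg'(e_j)/h(e_j)$ from \eqref{eq:theta}. The terms with $j=0$ and $j=n+1$ have been dropped because $z_0=z_{n+1}=0$.

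\textbf{Step 3 (conclude).} By \eqref{eq:weights}, each bracket $p_j-p_{j-1}\theta(e_j)$ is zero, so the whole sum is zero. This is exactly why the weights were chosen recursively as they were.

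The only point requiring care is the bookkeeping at the two boundary edges $i=0$ and $i=n$, where the conventions $e_0=1$ and $e_{n+1}=0$ enter. There is no genuine obstacle.
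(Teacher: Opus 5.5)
Your proof is correct and is essentially the paper's argument. The paper applies the recursion $p_{i+1}=p_i\theta(e_{i+1})$ termwise, writing each $\frac{p_i}{c_i}\eta_i$ as $\frac{p_i}{e_i}z_i-\frac{p_{i+1}}{e_{i+1}}z_{i+1}$, and then telescopes; you instead reindex first and apply the same recursion to the collected coefficient of each $z_j$, with the same boundary bookkeeping at $i=0$ and $i=n$.
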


\begin{proof}
For $i=0,\dots,n-1$, \eqref{eq:ci}, \eqref{eq:eta}, and \eqref{eq:weights} give
$$
\frac{p_i}{c_i}\eta_i
=\frac{p_i}{e_i}z_i
-\frac{p_i g'(e_{i+1})}{h(e_{i+1})}z_{i+1}
=\frac{p_i}{e_i}z_i
-\frac{p_{i+1}}{e_{i+1}}z_{i+1}.
$$
For the last edge, $c_n=e_n$ and $\eta_n=z_n$, so
$$
\frac{p_n}{c_n}\eta_n=\frac{p_n}{e_n}z_n.
$$
The sum telescopes, and the boundary term at $i=0$ vanishes because $z_0=0$.
\end{proof}

The next theorem is the central identity of the paper.

\begin{theorem}[Exact nonlinear edge-dissipation identity]\label{thm:dissipation}
For $i=0,\dots,n$, define
\begin{equation}
d_i
:=\frac{\eta_i^2}{c_i}
-\left(1-\frac{\eta_i}{c_i}\right)\rho_i.
\label{eq:di}
\end{equation}
Then
\begin{equation}
{
\GOE(u)
=-\sum_{i=0}^np_i\av{u_i d_i}.
}
\label{eq:dissidentity}
\end{equation}
\end{theorem}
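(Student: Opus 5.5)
The identity should follow from one algebraic rearrangement of $u_id_i$ together with a discrete summation by parts that turns the leftover weighted sum into an exact time derivative along $\gamma$. Write $F_i(t):=\gamma_i h(\gamma_{i+1})$, so that by \eqref{eq:rho} $F_i=c_i+\eta_i+\rho_i$ and $q_i=u_iF_i$. Put $\varepsilon_i:=\eta_i/c_i$.

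\textbf{Step 1: algebraic rearrangement.} Multiply $F_i$ by $1-\varepsilon_i$:
$$(1-\varepsilon_i)F_i=(1-\varepsilon_i)(c_i+c_i\varepsilon_i)+(1-\varepsilon_i)\rho_i=c_i-c_i\varepsilon_i^2+(1-\varepsilon_i)\rho_i .$$
Comparing with \eqref{eq:di}, the right-hand side is exactly $c_i-d_i$, so $d_i=c_i-(1-\varepsilon_i)F_i$. Multiplying by $u_i$ gives
$$u_id_i=u_ic_i-q_i+\frac{\eta_i}{c_i}\,q_i .$$

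\textbf{Step 2: averaging.} Average over one period. Since $c_i$ is constant, $\av{u_ic_i}=\lambda_ic_i=R_C$ by \eqref{eq:RC}, and $\av{q_i}=R_P$ by \eqref{eq:RPbalance}. Hence
$$\av{u_id_i}=R_C-R_P+\av{\tfrac{\eta_i}{c_i}q_i}.$$
Multiply by $p_i$ and sum, using $\sum_{i=0}^n p_i=1$:
$$\sum_{i=0}^np_i\av{u_id_i}=-\GOE(u)+\av{\textstyle\sum_{i=0}^n \frac{p_i}{c_i}\eta_i q_i}.$$
So \eqref{eq:dissidentity} is equivalent to the vanishing of the average of $S(t):=\sum_i (p_i/c_i)\eta_i q_i$.

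\textbf{Step 3: the cross term is an exact derivative.} This is the step that needs an idea. Lemma~\ref{lem:telescoping} only says $\sum_i (p_i/c_i)\eta_i=0$, whereas here each term carries the extra factor $q_i$, so the telescoping has to be redone with that factor in place. Its proof gives, with $w_i:=p_i z_i/e_i$ for $i=1,\dots,n$ and $w_0=w_{n+1}=0$,
$$\frac{p_i}{c_i}\eta_i=w_i-w_{i+1}.$$
Summing by parts then yields
$$S=\sum_{i=0}^n (w_i-w_{i+1})q_i=\sum_{i=1}^n w_i(q_i-q_{i-1})=-\sum_{i=1}^n \frac{p_i}{e_i}z_i\dot z_i,$$
because $\dot z_i=\dot\gamma_i=q_{i-1}-q_i$. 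Thus
$$S=-\frac{d}{dt}\,\frac12\sum_{i=1}^n\frac{p_i}{e_i}z_i^2 .$$
Since $\gamma$, and hence $z$, is $T$-periodic, $\av{S}=0$, which proves \eqref{eq:dissidentity}.

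The only points to check carefully are the boundary conventions in the summation by parts. These are $z_0=z_{n+1}=0$, and $c_n=e_n$ with $\eta_n=z_n$ (so the last telescoping term is $w_n$). The ratios $p_i/e_i$ are well defined because the equilibrium lies in $(0,1)^n$ by Theorem~\ref{thm:autonomous}.
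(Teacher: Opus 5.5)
Your proof is correct and follows the paper's argument. The key step is the same: you show that $\av{\sum_i (p_i/c_i)\eta_i q_i}$ vanishes because it is minus the period average of the derivative of the weighted energy $V=\frac12\sum_j (p_j/e_j)z_j^2$. Your pointwise identity $u_id_i=u_ic_i-q_i+(\eta_i/c_i)q_i$ simply carries out in advance the elimination the paper performs after substituting $q_i=u_i(c_i+\eta_i+\rho_i)$.
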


\begin{proof}
Introduce the weighted quadratic energy
\begin{equation}
V(t):=\frac12\sum_{j=1}^n\frac{p_j}{e_j}z_j^2(t).
\label{eq:energy}
\end{equation}
Since $\dot z_j=q_{j-1}-q_j$, summation by parts and the proof of Lemma~\ref{lem:telescoping} yield
\begin{align*}
\dot V
&=\sum_{j=1}^n\frac{p_j}{e_j}z_j(q_{j-1}-q_j)\\
&=-\sum_{i=0}^n\frac{p_i}{c_i}q_i\eta_i.
\end{align*}
As $V$ is $T$-periodic,
\begin{equation}
\sum_{i=0}^n\frac{p_i}{c_i}\av{q_i\eta_i}=0.
\label{eq:energycancel}
\end{equation}
Now
$$
q_i=u_i(c_i+\eta_i+\rho_i),
$$
so \eqref{eq:energycancel} becomes
\begin{equation}
\sum_{i=0}^np_i\av{u_i\eta_i}
=-\sum_{i=0}^np_i\av{u_i\frac{\eta_i^2}{c_i}}
-\sum_{i=0}^np_i\av{u_i\frac{\eta_i\rho_i}{c_i}}.
\label{eq:firstsum}
\end{equation}
On the other hand, using \eqref{eq:RPbalance}, \eqref{eq:RC}, and $\sum p_i=1$,
\begin{align*}
\GOE(u)
&=\sum_{i=0}^np_i\bigl(\av{q_i}-\lambda_i c_i\bigr)\\
&=\sum_{i=0}^np_i\av{u_i(\eta_i+\rho_i)}.
\end{align*}
Eliminating the first sum by \eqref{eq:firstsum} gives
$$
\GOE(u)
=-\sum_{i=0}^np_i\av{u_i\left[
\frac{\eta_i^2}{c_i}
-\left(1-\frac{\eta_i}{c_i}\right)\rho_i
\right]},
$$
which is \eqref{eq:dissidentity}.
\end{proof}

\begin{remark}[Exact reduction to the classical edge-dissipation identity]\label{rem:classicalreduction}
Let $g(x)=x$.  Then $h(x)=1-x$ and
$$
\theta(e_i)=\frac{e_i}{1-e_i},
$$
so our recursion
$$
p_i=p_{i-1}\theta(e_i)
$$
is exactly the classical weight recursion
$$
p_i=p_{i-1}\frac{e_i}{1-e_i}
$$
used in \cite{MassasMargaliot2026Cost}, with the same normalization $\sum_{i=0}^n p_i=1$.
If $\bar\gamma_i:=\gamma_i-e_i$, then $z_i=\bar\gamma_i$ and
$$
\eta_i=(1-e_{i+1})\bar\gamma_i-e_i\bar\gamma_{i+1}.
$$
Thus $\eta_i$ is exactly the centered edge variable denoted by $h_i$ in
\cite{MassasMargaliot2026Cost}.  Moreover,
$$
\rho_i
=\gamma_i(1-\gamma_{i+1})-e_i(1-e_{i+1})-\eta_i
=-\bar\gamma_i\bar\gamma_{i+1}.
$$
Hence our edge dissipation \eqref{eq:di} becomes
$$
d_i
=\frac{\eta_i^2}{c_i}
+\left(1-\frac{\eta_i}{c_i}\right)
\bar\gamma_i\bar\gamma_{i+1},
$$
which agrees term by term, including the sign of the mixed product, with the
classical edge dissipation in \cite{MassasMargaliot2026Cost}.  Therefore
$$
\GOE(u)=-\sum_{i=0}^n p_i\av{u_i d_i}
$$
reduces exactly to the dissipation representation of Massas and Margaliot \cite{MassasMargaliot2026Cost}, not merely to an
inequality with the same sign.  The nonlinear theory below differs precisely
through the Taylor remainder $\rho_i$, which is no longer the single product
$-z_i z_{i+1}$ and whose effect is governed by curvature.
\end{remark}

\subsection{A curvature criterion for nonnegative edge dissipation}

For $i<n$, write
$$
a=e_i,\qquad b=e_{i+1},\qquad x=\gamma_i,\qquad y=\gamma_{i+1}.
$$
Define normalized variables
\begin{equation}
r:=\frac{x-a}{a},
\qquad
s:=\frac{g'(b)}{h(b)}(y-b),
\label{eq:rs}
\end{equation}
and
\begin{equation}
A_b(s):=\frac{h(y)}{h(b)}.
\label{eq:Ab}
\end{equation}
These definitions also apply to the entrance edge $i=0$, where $a=x=1$ and hence $r=0$.

\begin{lemma}[Normalized edge dissipation]\label{lem:normalizedd}
For every $i=0,\dots,n-1$,
\begin{equation}
\frac{d_i}{c_i}
=1-A_b(s)(1+r)(1-r+s),
\label{eq:dnormalized1}
\end{equation}
and equivalently
\begin{equation}
{
\frac{d_i}{c_i}
=A_b(s)\left(r-\frac{s}{2}\right)^2
+1-A_b(s)\left(1+\frac{s}{2}\right)^2.
}
\label{eq:dnormalized2}
\end{equation}
For the exit edge,
\begin{equation}
d_n=\frac{z_n^2}{e_n}\ge0.
\label{eq:lastd}
\end{equation}
\end{lemma}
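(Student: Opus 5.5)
The plan is a direct algebraic computation: express every quantity in \eqref{eq:di} through the normalized variables $r,s$ and the ratio $A_b(s)$. Fix $i<n$ and write $a=e_i$, $b=e_{i+1}$, $x=\gamma_i$, $y=\gamma_{i+1}$, so that $c_i=ah(b)$. Substituting $x=a(1+r)$ and $h(y)=A_b(s)h(b)$, the actual edge factor becomes
$$
\gamma_ih(\gamma_{i+1})=xh(y)=c_i(1+r)A_b(s).
$$
Next, $z_i=ar$ and $z_{i+1}=y-b=s\,h(b)/g'(b)$. Inserting these into \eqref{eq:eta} gives
$$
\eta_i=h(b)ar-ag'(b)\frac{h(b)}{g'(b)}s=c_i(r-s),
$$
and therefore $\eta_i/c_i=r-s$. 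The remainder \eqref{eq:rho} is then
$$
\rho_i/c_i=(1+r)A-1-(r-s),
$$
where I abbreviate $A=A_b(s)$.

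With these substitutions, $d_i/c_i=(r-s)^2-(1-r+s)\bigl[(1+r)A-(1+r-s)\bigr]$. I expand the non-$A$ part:
$$
(r-s)^2+(1-r+s)(1+r-s)=(r-s)^2+1-(r-s)^2=1.
$$
What remains is $-A(1+r)(1-r+s)$, which is \eqref{eq:dnormalized1}. For \eqref{eq:dnormalized2}, write $(1+r)(1-r+s)=(1+s/2)^2-(r-s/2)^2$; this is a difference of squares in $u=1+s/2$ and $v=r-s/2$, since $u+v=1+r$ and $u-v=1-r+s$. Substituting into \eqref{eq:dnormalized1} gives the completed-square form immediately.

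The entrance edge $i=0$ is covered by the same computation with $a=x=1$, $z_0=0$ and $r=0$. The relation $\eta_0=-g'(e_1)z_1=c_0(0-s)$ still holds because $c_0=h(e_1)$.

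For the exit edge, $h(x_{n+1})=h(0)=1$, so the edge factor is exactly linear. This gives $c_n=e_n$, $\eta_n=z_n$ and $\rho_n=0$, hence $d_n=z_n^2/e_n\ge0$ by \eqref{eq:di}, using $e_n>0$ from Theorem~\ref{thm:autonomous}.

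The only delicate step is bookkeeping. One must confirm that the normalization of $s$ in \eqref{eq:rs} makes $\eta_i/c_i$ exactly $r-s$; the key cancellation of the non-$A$ terms to $1$ depends on this. I would double-check that sign first.
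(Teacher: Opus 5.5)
Your proposal is correct and follows essentially the same route as the paper: you write $\eta_i/c_i=r-s$ and $\gamma_ih(\gamma_{i+1})/c_i=(1+r)A_b(s)$, substitute into \eqref{eq:di}, obtain the completed-square form from the difference-of-squares identity, and handle the exit edge through the exact linearity of the edge factor when $h(e_{n+1})=h(0)=1$. Your explicit check that the non-$A_b$ terms collapse to $1$, together with your verification of the entrance edge $i=0$, fills in details the paper leaves implicit.
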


\begin{proof}
By construction,
$$
\frac{\eta_i}{c_i}=r-s,
$$
and
$$
\frac{\gamma_i h(\gamma_{i+1})}{c_i}=(1+r)A_b(s).
$$
Hence
$$
\frac{\rho_i}{c_i}
=(1+r)A_b(s)-1-r+s.
$$
Substitution into \eqref{eq:di} gives \eqref{eq:dnormalized1}.  The identity
$$
(1+r)(1-r+s)
=\left(1+\frac{s}{2}\right)^2
-\left(r-\frac{s}{2}\right)^2
$$
gives \eqref{eq:dnormalized2}.  For $i=n$, $h(e_{n+1})=h(0)=1$, $z_{n+1}=0$, and the edge map is linear in $x_n$, so $\rho_n=0$, $\eta_n=z_n$, and $c_n=e_n$.
\end{proof}

Define
\begin{equation}
W(x):=h(x)^{-1/2}=[1-g(x)]^{-1/2}.
\label{eq:W}
\end{equation}

\begin{theorem}[Global no-GOE under hindrance curvature]\label{thm:globalnogoe}
Assume $g\in C^2([0,1])$ and \eqref{eq:2.1}.  If $W$ is convex on $(0,1)$, then for every $n$, every $T>0$, and every family of positive continuous $T$-periodic rates,
\begin{equation}
{R_P\le R_C.}
\label{eq:globalnogoe}
\end{equation}
If $W$ is strictly convex, then equality holds if and only if there exists a positive $T$-periodic function $m$ with $\av{m}=1$ such that
\begin{equation}
u_i(t)=m(t)\lambda_i,
\qquad i=0,\dots,n.
\label{eq:commonmod}
\end{equation}
\end{theorem}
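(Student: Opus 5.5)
The approach is to show that every edge dissipation $d_i$ in the identity \eqref{eq:dissidentity} of Theorem~\ref{thm:dissipation} is pointwise nonnegative along the periodic orbit. Since $p_i>0$ and $u_i>0$, this gives $\GOE(u)=-\sum_i p_i\av{u_id_i}\le 0$ at once. For the exit edge, \eqref{eq:lastd} already gives $d_n\ge0$. For $i<n$ I will use the two normalized forms in Lemma~\ref{lem:normalizedd}, with $a,b,x,y,r,s,A_b(s)$ as in \eqref{eq:rs}--\eqref{eq:Ab}. The key observation is that the tangent line of $W$ at $b$ is expressed exactly through $s$. Indeed, $W'(b)=\tfrac12 g'(b)h(b)^{-3/2}$, so
$$
W(b)+W'(b)(y-b)=W(b)\Bigl(1+\tfrac{s}{2}\Bigr).
$$
Convexity of $W$ therefore gives $W(y)\ge W(b)(1+s/2)$, while $A_b(s)=h(y)/h(b)=W(b)^2/W(y)^2$.

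I then split into two cases according to the sign of $1+s/2$.

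\emph{Case $1+s/2\ge0$.} Squaring the tangent inequality yields $A_b(s)(1+s/2)^2\le 1$. Hence in \eqref{eq:dnormalized2} both the term $1-A_b(s)(1+s/2)^2$ and the term $A_b(s)(r-s/2)^2$ are nonnegative, so $d_i\ge0$.

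\emph{Case $1+s/2<0$.} Here squaring is not available, so I switch to \eqref{eq:dnormalized1}. Then $s<-2$, and $1+r=x/a>0$. Since $x\le 1$ and $x>0$, we have $-1<r\le 1/a-1$, and the relevant bound is $1-r+s<2+s<0$. Thus $(1+r)(1-r+s)<0$, and with $A_b(s)>0$ this gives $d_i/c_i>1>0$. The same reasoning covers the entrance edge $i=0$, where $r=0$. This case distinction is the only real subtlety, because the quadratic-completion form alone does not control the regime in which the tangent value is negative.

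For the equality statement under strict convexity, suppose $\GOE(u)=0$. Then $\av{u_id_i}=0$ for every $i$. By continuity and positivity of $u_i$, this forces $d_i\equiv0$ for all $i$.
\begin{itemize}[leftmargin=*]
\item In the second case $d_i>0$ strictly, so that case cannot occur.
\item In the first case, strict convexity gives $W(y)>W(b)(1+s/2)$ unless $y=b$. So $d_i=0$ forces $y=b$, hence $s=0$ and $A_b=1$, and then $d_i/c_i=r^2$ forces $r=0$.
\item For the exit edge, $d_n=z_n^2/e_n=0$ gives $z_n=0$.
\end{itemize}
Running over all edges shows $\gamma\equiv e$. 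Then $\dot\gamma_i=0$ gives $u_{i-1}(t)c_{i-1}=u_i(t)c_i$ for all $i$ and $t$. Setting $m(t):=u_n(t)c_n/R_C$ and using \eqref{eq:RC}, we get $u_i(t)=m(t)\lambda_i$, and averaging gives $\av{m}=1$. Conversely, under \eqref{eq:commonmod}, Corollary~\ref{cor:timescale} gives $\gamma\equiv e$. Then $R_P=\av{u_n}e_n=\lambda_ne_n=R_C$, so equality holds.
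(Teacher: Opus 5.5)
Your proposal is correct and follows the paper's proof essentially step for step. You show $d_i\ge0$ via the tangent-line inequality for $W$ at $b$ when $1+s/2\ge0$, and via \eqref{eq:dnormalized1} with $r>-1$ when $s<-2$; you then run the same edge-by-edge rigidity argument and obtain the converse from Corollary~\ref{cor:timescale}. The only differences are cosmetic: you put the boundary value $s=-2$ in the first case rather than the second, and you spell out the continuity argument for $d_i\equiv0$ in slightly more detail.
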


\begin{proof}
Fix an edge $i<n$ and use the notation of Lemma~\ref{lem:normalizedd}.  Suppose first that $s>-2$.  Since
$$
W'(b)=\frac{g'(b)}{2h(b)^{3/2}},
$$
we have
$$
W(b)+W'(b)(y-b)
=W(b)\left(1+\frac{s}{2}\right)>0.
$$
Convexity of $W$ gives
$$
W(y)\ge W(b)\left(1+\frac{s}{2}\right).
$$
After squaring and using $A_b(s)=W(b)^2/W(y)^2$,
\begin{equation}
A_b(s)\left(1+\frac{s}{2}\right)^2\le1.
\label{eq:tangentineq}
\end{equation}
Thus both terms in \eqref{eq:dnormalized2} are nonnegative, so $d_i\ge0$.

If $s\le-2$, then $r>-1$ because $x=a(1+r)>0$, and therefore
$$
1-r+s<0.
$$
Since $A_b(s)>0$ and $1+r>0$, \eqref{eq:dnormalized1} gives $d_i/c_i>1$.  The last edge is nonnegative by \eqref{eq:lastd}.  Hence every $d_i$ is nonnegative, and Theorem~\ref{thm:dissipation} yields $R_P-R_C\le0$.

Assume now that $W$ is strictly convex and $R_P=R_C$.  Since every $p_i>0$, every $u_i(t)>0$, and every $d_i(t)\ge0$, the exact identity \eqref{eq:dissidentity} implies
$$
d_i(t)\equiv0
$$
for all $i$.  For $i<n$, the case $s\le-2$ is impossible.  Equality in \eqref{eq:tangentineq} and strict convexity force $y=b$, hence $s=0$; then the first term in \eqref{eq:dnormalized2} forces $r=0$.  Starting from the entrance edge and proceeding along the chain yields
$$
\gamma_i(t)\equiv e_i,
\qquad i=1,\dots,n.
$$
Thus the periodic orbit is constant.  The flow equations then imply
$$
u_i(t)c_i=Q(t)
$$
for a common instantaneous current $Q(t)$.  Since $\lambda_i c_i=R_C$,
$$
u_i(t)=\frac{Q(t)}{R_C}\lambda_i.
$$
Setting $m(t)=Q(t)/R_C$ gives \eqref{eq:commonmod}, and averaging yields $\av{m}=1$.

Conversely, if \eqref{eq:commonmod} holds, Corollary~\ref{cor:timescale} shows that the unique periodic solution is the constant equilibrium $e$, so $R_P=R_C$.
\end{proof}

A convenient pointwise version of the curvature hypothesis follows immediately.

\begin{corollary}[Differential curvature condition]\label{cor:Wcondition}
Under the assumptions of Theorem~\ref{thm:globalnogoe}, it is sufficient that
\begin{equation}
{
3[g'(x)]^2+2[1-g(x)]g''(x)\ge0,
\qquad 0<x<1.
}
\label{eq:curvaturecondition}
\end{equation}
Indeed,
\begin{equation}
W''(x)=
\frac{3[g'(x)]^2+2[1-g(x)]g''(x)}{4[1-g(x)]^{5/2}}.
\label{eq:Wsecond}
\end{equation}
\end{corollary}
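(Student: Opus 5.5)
The plan is to reduce the corollary to Theorem~\ref{thm:globalnogoe} by computing $W''$ explicitly. Formula \eqref{eq:Wsecond} then shows that the pointwise inequality \eqref{eq:curvaturecondition} is the same as $W''\ge0$ on $(0,1)$, which gives convexity of $W$. Since $g\in C^2([0,1])$ and $h=1-g>0$ on $[0,1)$, the function $W=h^{-1/2}$ is $C^2$ on $(0,1)$, so the differentiation is legitimate.

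First I would differentiate once. From $W=h^{-1/2}$ and $h'=-g'$ we get
\[
W'=-\tfrac12 h^{-3/2}h'=\tfrac12 g' h^{-3/2},
\]
which agrees with the expression for $W'(b)$ used in the proof of Theorem~\ref{thm:globalnogoe}.

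Next I would differentiate again by the product rule:
\[
W''=\tfrac12 g'' h^{-3/2}+\tfrac12 g'\cdot\bigl(-\tfrac32\bigr)h^{-5/2}h'
=\tfrac12 g'' h^{-3/2}+\tfrac34 (g')^2h^{-5/2}.
\]
Putting both terms over the common denominator $4h^{5/2}$ gives
\[
W''=\frac{2hg''+3(g')^2}{4h^{5/2}},
\]
which is \eqref{eq:Wsecond}.

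Finally, $4h^{5/2}>0$ on $(0,1)$, so the sign of $W''$ is the sign of $3(g')^2+2(1-g)g''$. Hence \eqref{eq:curvaturecondition} gives $W''\ge0$ on the open interval $(0,1)$. A $C^2$ function with nonnegative second derivative on an interval is convex, so the hypothesis of Theorem~\ref{thm:globalnogoe} holds and $R_P\le R_C$ follows.

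For the strict-convexity clause it is enough that $W''>0$ except on a set with empty interior. In particular, strict inequality in \eqref{eq:curvaturecondition}, as for $g(x)=x^2$ where the left side equals $8x^2+4(1-x^2)>0$, gives strict convexity.

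There is no real obstacle. The only point needing care is that $h$ vanishes at $x=1$, so convexity is asserted only on $(0,1)$. Theorem~\ref{thm:globalnogoe} requires nothing more, because it applies the tangent-line inequality only at points $b,y\in(0,1)$, and Lemma~\ref{lem:invariance} guarantees that the periodic orbit lies in $(0,1)^n$.
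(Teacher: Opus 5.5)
Your proof is correct and follows the paper's argument exactly: the paper just records the formula for $W''$ and uses $4[1-g(x)]^{5/2}>0$ on $(0,1)$ to turn the pointwise inequality into convexity of $W$, and your differentiation carries out that computation correctly. One harmless slip in your aside on $g(x)=x^2$: since $3[g'(x)]^2=3(2x)^2=12x^2$, the left side is $12x^2+4(1-x^2)=4+8x^2$, not $8x^2+4(1-x^2)$; positivity, and hence strict convexity, is unaffected.
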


\subsection{The quadratic hindrance $1-x^2$}

\begin{corollary}[Complete no-GOE theorem for $1-x^2$]\label{cor:quadratic}
Let $g(x)=x^2$.  Then for every dimension $n$, every positive jointly $T$-periodic family $u_0,\dots,u_n$, and every forcing amplitude,
\begin{equation}
{R_P\le R_C.}
\label{eq:quadraticnogoe}
\end{equation}
Moreover,
$$
R_P=R_C
$$
if and only if all rates have the common-modulation form \eqref{eq:commonmod}.  Hence every genuinely nonuniform periodic forcing satisfies
$$
R_P<R_C.
$$
\end{corollary}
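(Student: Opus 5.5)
The plan is to obtain Corollary~\ref{cor:quadratic} as a direct specialization of Theorem~\ref{thm:globalnogoe}, using the pointwise criterion of Corollary~\ref{cor:Wcondition}. First we check the standing hypotheses for $g(x)=x^2$. We have $g\in C^2([0,1])$, $g(0)=0$, $g(1)=1$, and $g'(x)=2x>0$ on $(0,1)$, so \eqref{eq:2.1} holds. Consequently the equilibrium $e$, the periodic orbit $\gamma$, and the quantities $R_C$, $R_P$ are well defined by Theorems~\ref{thm:autonomous} and~\ref{thm:periodic}.

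The main step is to verify \emph{strict} convexity of $W(x)=(1-x^2)^{-1/2}$ on $(0,1)$, since this is what the equality clause of Theorem~\ref{thm:globalnogoe} requires. Substituting into \eqref{eq:Wsecond}, the numerator is
$$
3[g'(x)]^2+2[1-g(x)]g''(x)=12x^2+4(1-x^2)=4+8x^2>0 .
$$
Hence $W''(x)=(1+2x^2)(1-x^2)^{-5/2}>0$ on $(0,1)$, so $W$ is strictly convex there. As a sanity check, one can also differentiate $W$ directly: $W'=x(1-x^2)^{-3/2}$, and a second differentiation gives the same expression.

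With strict convexity in hand, Theorem~\ref{thm:globalnogoe} gives $R_P\le R_C$ for every $n$, every $T$, and every positive continuous $T$-periodic family. No amplitude restriction enters, because the dissipation argument is exact and global. The same theorem gives equality if and only if $u_i=m\lambda_i$ with $m>0$ and $\av{m}=1$, which is precisely \eqref{eq:commonmod}. The final sentence of the corollary is the contrapositive of the ``only if'' direction: if the forcing is not of common-modulation form, then $R_P\ne R_C$, and combined with $R_P\le R_C$ this yields $R_P<R_C$.

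There is no real obstacle here. The only point needing care is that strict convexity holds on the open interval $(0,1)$, which is the domain used in the equality argument of Theorem~\ref{thm:globalnogoe}, where $y=\gamma_{i+1}(t)\in(0,1)$ by Lemma~\ref{lem:invariance}. The blow-up of $W$ at $x=1$ is therefore irrelevant.
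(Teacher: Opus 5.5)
Your proposal is correct and follows the paper's proof: you verify that $3[g'(x)]^2+2[1-g(x)]g''(x)=4+8x^2>0$, conclude that $W=(1-x^2)^{-1/2}$ is strictly convex on $(0,1)$, and then invoke Theorem~\ref{thm:globalnogoe} for both the inequality and the equality characterization. Your additional checks, namely the standing hypotheses \eqref{eq:2.1}, the direct computation $W''(x)=(1+2x^2)(1-x^2)^{-5/2}$, and the remark that only the open interval matters, are consistent with the paper's argument.
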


\begin{proof}
For $g(x)=x^2$,
$$
g'(x)=2x,\qquad g''(x)=2,
$$
and
$$
3[g'(x)]^2+2[1-g(x)]g''(x)
=12x^2+4(1-x^2)=4+8x^2>0.
$$
Thus $W(x)=(1-x^2)^{-1/2}$ is strictly convex, and the result follows from Theorem~\ref{thm:globalnogoe}.
\end{proof}

\subsection{Further nonlinear classes covered by the global criterion}

The curvature condition is not specific to the quadratic law.  It produces broad families of nonlinear exclusion profiles for which the classical production-cost phenomenon remains globally rigid.

\begin{corollary}[Power-law hindrance]\label{cor:power}
Let
$$
g(x)=x^p,\qquad p\ge1.
$$
Then the nonlinear RFM with hindrance $1-x^p$ satisfies
$$
R_P\le R_C
$$
for arbitrary dimension, forcing amplitude, and positive jointly periodic rates.  If $p\ge1$, equality is possible only under the common-modulation condition \eqref{eq:commonmod}.
\end{corollary}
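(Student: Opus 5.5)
The plan is to check the differential curvature condition of Corollary~\ref{cor:Wcondition} for $g(x)=x^p$ and then invoke Theorem~\ref{thm:globalnogoe}. First, $g(x)=x^p$ with $p\ge1$ satisfies \eqref{eq:2.1}: $g(0)=0$, $g(1)=1$, and $g'(x)=px^{p-1}>0$ on $(0,1)$. Regularity is the one place needing care. For $p\ge2$, or $p=1$, or $p$ a positive integer, $g\in C^2([0,1])$ and the theorem applies as stated. For $1<p<2$, $g''(x)=p(p-1)x^{p-2}$ blows up at $0$, so $g\notin C^2([0,1])$. However, the proof of Theorem~\ref{thm:globalnogoe} uses only convexity of $W$ on $(0,1)$, through the tangent-line inequality at interior points $b=e_{i+1}\in(0,1)$ and $y=\gamma_{i+1}(t)\in(0,1)$. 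I will remark that $C^1([0,1])\cap C^2((0,1))$ suffices, so this case is covered as well.

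Second, I compute the curvature quantity:
$$
3[g'(x)]^2+2[1-g(x)]g''(x)=3p^2x^{2p-2}+2p(p-1)x^{p-2}(1-x^p)
=px^{p-2}\bigl[(p+2)x^p+2(p-1)\bigr].
$$
For $p\ge1$ and $x\in(0,1)$ this is strictly positive, since $(p+2)x^p>0$ and $2(p-1)\ge0$. By \eqref{eq:Wsecond}, $W''>0$ on $(0,1)$, so $W$ is strictly convex there. Theorem~\ref{thm:globalnogoe} then gives $R_P\le R_C$ for every dimension, period, forcing amplitude and family of positive periodic rates. Strict convexity of $W$ also gives the equality characterization: equality holds only under the common modulation \eqref{eq:commonmod}. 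For $p=1$ this recovers the classical result, and for $p=2$ it recovers Corollary~\ref{cor:quadratic}.

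The main obstacle is the justification at the endpoint $x=0$ when $1<p<2$. I would handle it by noting explicitly that Theorem~\ref{thm:globalnogoe}'s proof never evaluates $g''$ at $0$, because every equilibrium and periodic-orbit value lies in $(0,1)$ by Lemma~\ref{lem:invariance} and Theorem~\ref{thm:autonomous}.
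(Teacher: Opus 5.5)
Your proof is correct and matches the paper's argument: you use the same factorization $px^{p-2}[(p+2)x^p+2(p-1)]>0$ from Corollary~\ref{cor:Wcondition}, deduce strict convexity of $W$, and apply Theorem~\ref{thm:globalnogoe} for both the inequality and the equality case. Your added remark on the case $1<p<2$ is a valid refinement that the paper's proof leaves implicit. There $g\notin C^2([0,1])$, but $g\in C^1([0,1])\cap C^2((0,1))$ suffices, because the dynamics, the dissipation identity and the tangent-line step only use $g'$ and convexity of $W$ at interior points.
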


\begin{proof}
A direct computation gives
$$
3[g'(x)]^2+2[1-g(x)]g''(x)
=p x^{p-2}\left(2(p-1)+(p+2)x^p\right).
$$
This quantity is nonnegative on $(0,1)$ and is strictly positive there.  Hence $W=(1-x^p)^{-1/2}$ is strictly convex, and Theorem~\ref{thm:globalnogoe} applies.
\end{proof}

A second example is provided by a smooth saturating profile that was also used in numerical studies of the nonlinear model.

\begin{corollary}[Trigonometric hindrance]\label{cor:sin}
Let
$$
g(x)=\sin\frac{\pi x}{2}.
$$
Then the global no-GOE and rigidity conclusions of Theorem~\ref{thm:globalnogoe} hold.
\end{corollary}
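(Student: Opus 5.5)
We verify the pointwise curvature condition of Corollary~\ref{cor:Wcondition} for $g(x)=\sin(\pi x/2)$ and then invoke Theorem~\ref{thm:globalnogoe}. The hypotheses \eqref{eq:2.1} hold: $g$ is smooth on $[0,1]$, $g(0)=0$, $g(1)=1$, and $g'(x)=\tfrac{\pi}{2}\cos(\pi x/2)>0$ for $0<x<1$. Strict positivity of the numerator in \eqref{eq:Wsecond} on $(0,1)$ will give strict convexity of $W$, and hence both the inequality $R_P\le R_C$ and the rigidity statement.

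Write $\phi=\pi x/2\in(0,\pi/2)$, so that $g''(x)=-\tfrac{\pi^2}{4}\sin\phi$. Then
$$
3[g'(x)]^2+2[1-g(x)]g''(x)
=\frac{\pi^2}{4}\Bigl(3\cos^2\phi-2(1-\sin\phi)\sin\phi\Bigr).
$$
Substituting $\cos^2\phi=(1-\sin\phi)(1+\sin\phi)$ and factoring out $1-\sin\phi$ gives
$$
\frac{\pi^2}{4}(1-\sin\phi)\bigl(3(1+\sin\phi)-2\sin\phi\bigr)
=\frac{\pi^2}{4}(1-\sin\phi)(3+\sin\phi).
$$
On $(0,\pi/2)$ both factors are strictly positive. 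Therefore $W''>0$ on $(0,1)$ by \eqref{eq:Wsecond}, and $W$ is strictly convex.

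Theorem~\ref{thm:globalnogoe} then yields $R_P\le R_C$ for every dimension, period and positive periodic family, with equality exactly under common modulation \eqref{eq:commonmod}.

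The only point needing care is the concavity of $g$. Because $g''<0$, the curvature condition is not automatic, and the factorization through $1-\sin\phi$ is what shows that the positive $[g']^2$ term dominates. Since $1-\sin\phi$ vanishes only at $x=1$, strict positivity holds on the open interval $(0,1)$, which is all that Theorem~\ref{thm:globalnogoe} requires.
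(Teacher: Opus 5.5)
Your proof is correct and follows the paper's own argument: you compute $3[g']^2+2(1-g)g''=\frac{\pi^2}{4}(1-\sin z)(3+\sin z)$, which is strictly positive on $(0,1)$, so $W$ is strictly convex by Corollary~\ref{cor:Wcondition}. Theorem~\ref{thm:globalnogoe} then gives both the no-GOE inequality and the rigidity statement. Your explicit check that $g'>0$ on the open interval, with $g'(1)=0$ being harmless, is a small addition the paper leaves implicit.
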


\begin{proof}
Writing $z=\pi x/2$, we have
$$
g'(x)=\frac{\pi}{2}\cos z,\qquad
g''(x)=-\frac{\pi^2}{4}\sin z.
$$
Therefore
\begin{align*}
3[g'(x)]^2+2[1-g(x)]g''(x)
&=\frac{\pi^2}{4}\left[3\cos^2 z-2\sin z(1-\sin z)\right]\\
&=\frac{\pi^2}{4}(1-\sin z)(3+\sin z)>0
\end{align*}
for $0<x<1$.
\end{proof}

\begin{remark}[Global versus scalar curvature]\label{rem:globalvscalar}
The condition $W''\ge0$ is stronger than what is needed in dimension one.  Indeed, Proposition~\ref{prop:aone} below shows that $W''\ge0$ implies
$$
\chi(x)\ge1+\frac{\theta(x)}4>0.
$$
By \eqref{eq:phisecondchi}, this forces $\Phi''(x)>0$.  Thus the global all-dimensional criterion automatically lies inside the sharp scalar no-GOE region.  The converse fails: scalar convexity can survive even when the higher-dimensional Riccati barrier is lost.  This gap is exactly where coupling-induced and dimension-induced gain may occur.
\end{remark}

\section{A sharp one-dimensional classification}\label{sec:scalar}

The global criterion in Theorem~\ref{thm:globalnogoe} is dimension-independent and therefore necessarily stronger than what is needed in a scalar model.  In one dimension an exact transformation gives a sharp condition.

For $n=1$, the equation is
\begin{equation}
\dot x=u_0(t)h(x)-u_1(t)x.
\label{eq:scalar}
\end{equation}
Define
\begin{equation}
\Phi(x):=\frac{x}{h(x)}=\frac{x}{1-g(x)}.
\label{eq:Phi}
\end{equation}
Since
$$
\Phi'(x)=\frac{h(x)+xg'(x)}{h(x)^2}>0,
$$
$\Phi$ is strictly increasing from $(0,1)$ onto $(0,\infty)$.

\begin{theorem}[Sharp scalar GOE classification]\label{thm:scalarsharp}
Assume $g\in C^2([0,1])$ and \eqref{eq:2.1}.  The following are equivalent:
\begin{enumerate}[label=\textup{(\alph*)}]
\item for every pair of positive mean rates and every positive jointly periodic control with these means, the scalar model satisfies $R_P\le R_C$;
\item $\Phi(x)=x/[1-g(x)]$ is convex on $(0,1)$.
\end{enumerate}
If $\Phi$ is strictly convex, every nonconstant periodic orbit gives strict inequality.
\end{theorem}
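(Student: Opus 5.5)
The key observation is an exact transformation of the scalar equation \eqref{eq:scalar}. Dividing by $h(x)>0$ gives
$$
\frac{\dot x}{h(x)}=u_0(t)-u_1(t)\Phi(x).
$$
Along the periodic orbit $\gamma$, let $H$ be a primitive of $1/h$ on $(0,1)$. Then $\frac{d}{dt}H(\gamma)=u_0-u_1\Phi(\gamma)$, and averaging over a period gives
$$
\lambda_0=\av{u_1\Phi(\gamma)}.
$$
The production rate is $R_P=\av{u_1\gamma}$, while the equilibrium satisfies $\lambda_0=\lambda_1\Phi(e)$ and $R_C=\lambda_1 e$.

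Set $\Psi:=\Phi^{-1}:(0,\infty)\to(0,1)$. Write $w(t):=u_1(t)/\lambda_1$, a probability density in time, and $v(t):=\Phi(\gamma(t))$. The two constraints become
$$
\av{w v}=\lambda_0/\lambda_1=\Phi(e),\qquad R_P/\lambda_1=\av{w\,\Psi(v)}.
$$
Hence $\GOE\le0$ is exactly the Jensen inequality $\av{w\Psi(v)}\le\Psi(\av{wv})$ for the weighted average. Convexity of $\Phi$ is equivalent to concavity of $\Psi$, since $\Phi$ is increasing and $\Psi''=-\Phi''(\Psi)/\Phi'(\Psi)^3$. This gives (b)$\Rightarrow$(a).

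For strictness, suppose $\Phi$ is strictly convex. Then $\Psi$ is strictly concave, and Jensen is an equality only if $v$ is constant on the support of $w$. Since $w>0$ everywhere, $\gamma$ must be constant. So every nonconstant periodic orbit gives strict inequality.

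For (a)$\Rightarrow$(b), argue by contraposition. Suppose $\Phi''(x_*)<0$ at some $x_*\in(0,1)$, so $\Psi$ is strictly convex near $v_*=\Phi(x_*)$. I will construct the control \emph{inversely}: prescribe the orbit and the exit rate, then solve for the entry rate.

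1. Pick a smooth $T$-periodic $\gamma(t)$ with small nonconstant oscillation around $x_*$, staying where $\Psi''>0$.
2. Take $u_1$ to be any positive constant, say $u_1\equiv1$.
3. Define
$$
u_0(t):=\frac{\dot\gamma+u_1\gamma}{h(\gamma)}.
$$
For small enough oscillation amplitude this is positive and continuous, and $\gamma$ solves \eqref{eq:scalar} by construction.
4. By Theorem~\ref{thm:periodic}, $\gamma$ is then the unique attracting periodic solution for this control. The means $\lambda_0=\av{u_0}$ and $\lambda_1=1$ are positive.

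With $w\equiv1$, the identity above gives $R_P=\av{\Psi(v)}>\Psi(\av v)=R_C$, by strict Jensen for the nonconstant $v=\Phi(\gamma)$ on the convex region. This contradicts (a).

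The main obstacle is the converse direction. One must make sure the inverse construction yields an \emph{admissible} control, meaning positive, continuous and periodic. One must also check that the region where $\Psi$ is strictly convex contains the whole range of $v$. Both follow from continuity of $\Phi''$ (since $g\in C^2$) by shrinking the amplitude.

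A minor point is the averaging identity $\lambda_0=\av{u_1\Phi(\gamma)}$. It uses periodicity of $H(\gamma)$, which holds because $\gamma$ stays in a compact subset of $(0,1)$ by Theorem~\ref{thm:periodic}.
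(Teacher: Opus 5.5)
Your proof is correct and follows the paper's argument essentially step for step. Both use the primitive $H$ of $1/h$, Jensen for the concave inverse $\Psi=\Phi^{-1}$ weighted by $u_1/\lambda_1$, and the inverse construction with $u_1$ constant and $u_0=(\dot\gamma+u_1\gamma)/h(\gamma)$; the only cosmetic difference is in the converse, where you apply strict Jensen on a neighbourhood where $\Psi''>0$ to an arbitrary small oscillation, whereas the paper prescribes $\Phi(\gamma_\delta)=\Phi(e)+\delta\cos(2\pi t/T)$ and reads off the sign from a second-order Taylor expansion.
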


\begin{proof}
Let
$$
a:=\av{u_0},\qquad b:=\av{u_1}.
$$
The constant-rate equilibrium $e$ satisfies
$$
ah(e)=be,
$$
or equivalently
\begin{equation}
\Phi(e)=\frac{a}{b}.
\label{eq:scalarbalance}
\end{equation}
Choose an antiderivative $H$ satisfying
$$
H'(x)=\frac1{h(x)}.
$$
Along the periodic solution $\gamma$,
$$
\frac{d}{dt}H(\gamma(t))
=u_0(t)-u_1(t)\Phi(\gamma(t)).
$$
Averaging over one period gives
\begin{equation}
\av{u_1\Phi(\gamma)}=a.
\label{eq:scalaravg}
\end{equation}
Define a probability measure on $[0,T]$ by
$$
d\mu(t)=\frac{u_1(t)}{bT}\,dt.
$$
Then \eqref{eq:scalaravg} and \eqref{eq:scalarbalance} imply
\begin{equation}
\int\Phi(\gamma)\,d\mu=\Phi(e).
\label{eq:phimean}
\end{equation}
If $\Phi$ is convex, its inverse $\Psi=\Phi^{-1}$ is concave.  Jensen's inequality gives
\begin{align*}
\frac{R_P}{b}
&=\int\gamma\,d\mu
=\int\Psi(\Phi(\gamma))\,d\mu\\
&\le\Psi\left(\int\Phi(\gamma)\,d\mu\right)
=\Psi(\Phi(e))=e.
\end{align*}
Thus $R_P\le be=R_C$.  Strict convexity of $\Phi$ makes $\Psi$ strictly concave, so equality forces $\Phi(\gamma)$, hence $\gamma$, to be constant $\mu$-almost everywhere and therefore everywhere.

Conversely, suppose $\Phi$ is not convex.  Since $\Phi\in C^2$, there exists $e\in(0,1)$ such that
$$
\Phi''(e)<0.
$$
Fix $b>0$ and set $a=b\Phi(e)$.  Let $\Psi=\Phi^{-1}$ and, for small $\delta$, define
\begin{equation}
q_\delta(t)=\Phi(e)+\delta\cos\frac{2\pi t}{T},
\qquad
\gamma_\delta(t)=\Psi(q_\delta(t)).
\label{eq:constructgamma}
\end{equation}
Take
$$
u_1(t)\equiv b
$$
and define
\begin{equation}
u_0(t):=\frac{\dot\gamma_\delta(t)+b\gamma_\delta(t)}{h(\gamma_\delta(t))}.
\label{eq:constructu0}
\end{equation}
For sufficiently small $|\delta|$, $u_0$ is positive.  By construction, $\gamma_\delta$ solves \eqref{eq:scalar}.  Moreover,
\begin{align*}
\av{u_0}
&=\av{\frac{\dot\gamma_\delta}{h(\gamma_\delta)}}
+b\av{\Phi(\gamma_\delta)}\\
&=b\Phi(e)=a,
\end{align*}
because the first average is the period average of $dH(\gamma_\delta)/dt$, and $\av{q_\delta}=\Phi(e)$.

Finally,
$$
\Psi''(\Phi(e))
=-\frac{\Phi''(e)}{[\Phi'(e)]^3}>0.
$$
Taylor expansion of \eqref{eq:constructgamma} yields
\begin{equation}
\av{\gamma_\delta}
=e+\frac{\delta^2}{4}\Psi''(\Phi(e))+o(\delta^2)>e
\label{eq:scalarpositive}
\end{equation}
for all sufficiently small nonzero $\delta$.  Therefore
$$
R_P=b\av{\gamma_\delta}>be=R_C,
$$
which contradicts (a).  Hence (a) implies convexity of $\Phi$.
\end{proof}

A useful expression for the scalar curvature is
\begin{equation}
\Phi''(x)
=\frac{2g'(x)}{[1-g(x)]^2}\chi(x),
\label{eq:phisecondchi}
\end{equation}
where
\begin{equation}
{
\chi(x):=1+\frac{xg'(x)}{1-g(x)}+\frac{xg''(x)}{2g'(x)}.
}
\label{eq:chi}
\end{equation}
Thus $\chi$ has exactly the same sign as $\Phi''$.  This quantity will become the diagonal potential of the higher-dimensional Jacobi operator.

\begin{remark}[Why the scalar and global criteria differ]
Theorem~\ref{thm:globalnogoe} requires convexity of $W=[1-g]^{-1/2}$, whereas the sharp scalar condition is convexity of $\Phi=x/[1-g]$.  The first condition is designed to make every \emph{edge dissipation} nonnegative independently of the neighboring state; it is therefore strong enough to control chains of arbitrary length.  The scalar condition only controls a single input-output balance and can hold even when the edgewise global curvature condition fails.
\end{remark}

\section{Second variation and the Jacobi operator}\label{sec:jacobi}

We now study small periodic perturbations of constant rates.  Throughout this section assume
$$
g\in C^3([0,1])
$$
in addition to \eqref{eq:2.1}.  Fix positive constants $\lambda_0,\dots,\lambda_n$ and let $e$ be their equilibrium.  Let
\begin{equation}
u_i^\varepsilon(t)=\lambda_i+\varepsilon v_i(t),
\qquad
\av{v_i}=0,
\label{eq:controlperturb}
\end{equation}
where the $v_i$ are continuous and $T$-periodic.  For small $\varepsilon$, positivity of the controls is preserved.  The periodic solution depends smoothly on $\varepsilon$; write
\begin{equation}
\gamma_i^\varepsilon(t)
=e_i+\varepsilon e_i w_i(t)+O(\varepsilon^2).
\label{eq:stateperturb}
\end{equation}
Because the mean controls are exactly $\lambda_i$, the comparison equilibrium and $R_C$ do not depend on $\varepsilon$.

The next lemma makes the remainder estimates used in the second variation
uniform in time.  In particular, the $O(\varepsilon^3)$ term below is not a
formal Taylor symbol.

\begin{lemma}[Uniform periodic-orbit and dissipation remainders]\label{lem:uniformremainder}
Fix the positive constant control $\lambda=(\lambda_0,\dots,\lambda_n)$,
its equilibrium $e$, and a continuous $T$-periodic perturbation
$v=(v_0,\dots,v_n)$ with $\av{v_i}=0$.  Assume $g\in C^3([0,1])$.
Let
$$
u_i^\varepsilon(t)=\lambda_i+\varepsilon v_i(t).
$$
Then there exist $\varepsilon_0>0$ and $C>0$ such that, for every
$|\varepsilon|\le\varepsilon_0$:
\begin{enumerate}[label=\textup{(\roman*)}]
\item $u_i^\varepsilon(t)>0$ for all $i$ and $t$, and the corresponding
$T$-periodic solution $\gamma^\varepsilon$ is well defined;
\item if $y(t)=\partial_\varepsilon\gamma^\varepsilon(t)|_{\varepsilon=0}$,
then
\begin{equation}
\sup_{t\in[0,T]}
\left|\gamma^\varepsilon(t)-e-\varepsilon y(t)\right|
\le C\varepsilon^2;
\label{eq:uniformorbitremainder}
\end{equation}
\item writing $y_j=e_jw_j$ and $w_0=0$, for every edge $i<n$, $j=i+1$,
\begin{align}
\sup_{t\in[0,T]}
\Bigg|
\frac{d_i^\varepsilon(t)}{c_i}
-&\varepsilon^2\Bigg[
 w_i^2-\theta(e_j)w_iw_j
+\left(
\theta(e_j)^2
+\frac{e_j^2g''(e_j)}{2[1-g(e_j)]}
\right)w_j^2
\Bigg]
\Bigg|
\le C|\varepsilon|^3,
\label{eq:uniformedgeremainder}
\end{align}
and
\begin{equation}
\sup_{t\in[0,T]}
\left|
\frac{d_n^\varepsilon(t)}{c_n}-\varepsilon^2w_n^2(t)
\right|
\le C|\varepsilon|^3.
\label{eq:uniformlastremainder}
\end{equation}
Consequently,
\begin{equation}
\left|
\GOE(u^\varepsilon)
+\varepsilon^2R_C\av{w^TH_n(e)w}
\right|
\le C|\varepsilon|^3.
\label{eq:uniformGOEremainder}
\end{equation}
\end{enumerate}
The constant $C$ may depend on $g,\lambda,v,T$ and $n$, but not on
$t$ or $\varepsilon$ in the indicated range.
\end{lemma}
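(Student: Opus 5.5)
The plan is to establish (i) and (ii) by an implicit-function argument for the time-$T$ map, then obtain (iii) from a Taylor expansion of the normalized dissipation of Lemma~\ref{lem:normalizedd} with uniform remainders, and finally assemble \eqref{eq:uniformGOEremainder} from Theorem~\ref{thm:dissipation}.

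\emph{Steps (i)--(ii).} Since $\min_i\lambda_i>0$ and the $v_i$ are bounded, $u_i^\varepsilon\ge\tfrac12\min_i\lambda_i$ for $|\varepsilon|\le\varepsilon_0$. Theorem~\ref{thm:periodic} then gives a unique $T$-periodic solution $\gamma^\varepsilon$. For smooth dependence on $\varepsilon$, I would consider the Poincar\'e map $P(a,\varepsilon)=x(T,0,a;\varepsilon)$. Because $g\in C^3$, the vector field is $C^2$ in $x$ and affine in $\varepsilon$, so $P$ is $C^2$ jointly near $(e,0)$. At $\varepsilon=0$ the fixed point is $e$, and $D_aP(e,0)=\Psi(T,0)$ is the transition matrix of the constant Jacobian at $e$. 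The proof of Lemma~\ref{lem:strictcontraction}, applied with $x=y=e$ in some $K_\delta$, gives $\|\Psi(T,0)\|_1\le q<1$. Hence $I-D_aP$ is invertible, and the implicit function theorem yields a $C^2$ branch $a(\varepsilon)$ of fixed points. By the uniqueness in Theorem~\ref{thm:periodic}, the solution through $a(\varepsilon)$ is $\gamma^\varepsilon$.

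Differentiable dependence of solutions on $(a,\varepsilon)$ over the compact interval $[0,T]$ then shows that $(t,\varepsilon)\mapsto\gamma^\varepsilon(t)$ is $C^2$ in $\varepsilon$, with second derivative bounded uniformly in $t\in[0,T]$. Taylor's theorem with this uniform bound gives \eqref{eq:uniformorbitremainder}, and shrinking $\varepsilon_0$ keeps all $\gamma^\varepsilon$ in a fixed strip $K_\delta$. I expect this step to be the main technical point, essentially because uniqueness is what identifies the implicit-function branch with $\gamma^\varepsilon$. Everything afterwards is algebra with uniform constants.

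\emph{Step (iii).} By Lemma~\ref{lem:normalizedd}, $d_i/c_i=F(r,s):=1-A_b(s)(1+r)(1-r+s)$. Here
$$
A_b(s)=\frac{h\bigl(b+s\,h(b)/g'(b)\bigr)}{h(b)}
$$
is $C^3$ near $s=0$ because $h\in C^3$ and $g'(b)>0$. Expanding,
$$
A_b(s)=1-s-\frac{g''(b)h(b)}{2g'(b)^2}s^2+O(s^3),
$$
so that $F(0,0)=0$, $\nabla F(0,0)=0$, and
$$
F(r,s)=r^2-rs+\Bigl(1+\frac{g''(b)h(b)}{2g'(b)^2}\Bigr)s^2+O(|r|^3+|s|^3),
$$
uniformly on a fixed neighbourhood of the origin. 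From (ii),
$$
r=\varepsilon w_i+O(\varepsilon^2),\qquad s=\varepsilon\,\theta(e_j)w_j+O(\varepsilon^2)
$$
uniformly in $t$. Since $F$ vanishes to second order at the origin, the $O(\varepsilon^2)$ corrections enter only at order $\varepsilon^3$. Substituting and using $\theta(e_j)^2h(e_j)g''(e_j)/(2g'(e_j)^2)=e_j^2g''(e_j)/(2h(e_j))$ gives \eqref{eq:uniformedgeremainder}. The entrance edge is the case $r=0$, $w_0=0$. For the exit edge, \eqref{eq:lastd} gives exactly $d_n/c_n=z_n^2/e_n^2=(\varepsilon w_n+O(\varepsilon^2))^2$, which is \eqref{eq:uniformlastremainder}.

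\emph{Assembly.} In \eqref{eq:dissidentity}, write $u_i^\varepsilon d_i=\lambda_i c_i(d_i/c_i)+\varepsilon v_i d_i$ and use $\lambda_ic_i=R_C$. Since $d_i=O(\varepsilon^2)$ uniformly, the terms $\varepsilon\av{v_id_i}$ are $O(\varepsilon^3)$. This gives
$$
\GOE(u^\varepsilon)=-R_C\sum_i p_i\av{d_i/c_i}+O(\varepsilon^3),
$$
and the weighted sum of the quadratic forms above is, by definition, $\varepsilon^2\av{w^TH_n(e)w}$. This yields \eqref{eq:uniformGOEremainder}, with all constants depending only on $g,\lambda,v,T,n$.
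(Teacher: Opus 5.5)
Your proposal is correct and follows essentially the same route as the paper: an implicit-function argument for the time-$T$ map at $(e,0)$, a uniform $C^3$ Taylor expansion of the normalized dissipation $1-A_b(s)(1+r)(1-r+s)$, and assembly via Theorem~\ref{thm:dissipation} using $\lambda_ic_i=R_C$. Your refinements make explicit what the paper leaves implicit: you get invertibility of $I-e^{AT}$ directly from $\|e^{AT}\|_1\le q<1$, you invoke uniqueness from Theorem~\ref{thm:periodic} to identify the implicit-function branch with $\gamma^\varepsilon$, and you split $u_i^\varepsilon=\lambda_i+\varepsilon v_i$ in the assembly. One small correction: the identity $\sum_ip_iQ_i=w^TH_n(e)w$ is not literally a definition but the short merging computation with $p_j=p_{j-1}\theta(e_j)$ carried out in the proof of Theorem~\ref{thm:jacobi}.
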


\begin{proof}
Let $\phi_\varepsilon(t,a)$ denote the solution at time $t$ issued from
$a$ under the control $u^\varepsilon$.  Since $e\in(0,1)^n$, choose a compact
cube $K\Subset(0,1)^n$ containing $e$ in its interior.  For sufficiently
small $|\varepsilon|$ and $a$ close to $e$, solutions on $[0,T]$ remain in
$K$.  On this compact set the vector field is $C^2$ in $(a,\varepsilon)$,
with uniformly bounded derivatives up to second order, because
$g\in C^3([0,1])$ and $v$ is bounded.

Let
$$
F(a,\varepsilon):=\phi_\varepsilon(T,a)-a.
$$
At $(e,0)$,
$$
D_aF(e,0)=e^{AT}-I,
\qquad A=D_xf(e,\lambda).
$$
By the strict finite-time contraction estimate proved above for the
autonomous flow, $A$ is Hurwitz; hence $1\notin\sigma(e^{AT})$ and
$D_aF(e,0)$ is invertible.  The finite-dimensional implicit function theorem
therefore yields a $C^2$ map $a(\varepsilon)$ such that
$F(a(\varepsilon),\varepsilon)=0$.  Thus
$$
\gamma^\varepsilon(t)=\phi_\varepsilon(t,a(\varepsilon))
$$
depends $C^2$-smoothly on $\varepsilon$, uniformly for $t\in[0,T]$.
Taylor's theorem on the compact interval $[0,T]$ gives
\eqref{eq:uniformorbitremainder}.  Differentiating the state equation at
$\varepsilon=0$ shows that $y$ solves
$$
\dot y=Ay+Gv,
\qquad y(0)=y(T),
$$
where $G=D_uf(e,\lambda)$.

For an edge $i<n$, put $j=i+1$ and use the normalized variables
$r,s$ from \eqref{eq:rs}.  Estimate \eqref{eq:uniformorbitremainder}
implies, uniformly in $t$,
$$
r=\varepsilon w_i+O(\varepsilon^2),
\qquad
s=\varepsilon\theta(e_j)w_j+O(\varepsilon^2).
$$
The exact normalized dissipation
$$
D_{e_j}(r,s):=1-A_{e_j}(s)(1+r)(1-r+s)
$$
is $C^3$ in a neighborhood of $(0,0)$.  Its constant and linear terms vanish,
and its quadratic Taylor polynomial is
$$
r^2-rs+\left(1+\frac{g''(e_j)[1-g(e_j)]}
{2[g'(e_j)]^2}\right)s^2.
$$
Taylor's theorem with remainder, on a fixed small neighborhood of the origin,
gives
$$
\left|D_{e_j}(r,s)-D^{(2)}_{e_j}(r,s)\right|
\le C(|r|+|s|)^3
$$
uniformly in $t$.  Substituting the preceding uniform expansions for $r$ and
$s$ proves \eqref{eq:uniformedgeremainder}.  The last-edge estimate follows
from the exact identity $d_n/c_n=(\gamma_n-e_n)^2/e_n^2$.

Finally $u_i^\varepsilon=\lambda_i+\varepsilon v_i$ is uniformly bounded,
$d_i^\varepsilon=O(\varepsilon^2)$ uniformly, and
$\lambda_i c_i=R_C$.  Multiplying the edge expansions by
$p_i u_i^\varepsilon$, summing over the finitely many edges, and averaging
over $[0,T]$ yields \eqref{eq:uniformGOEremainder}.  Positivity of
$u_i^\varepsilon$ for sufficiently small $|\varepsilon|$ follows from
$\min_i\lambda_i>0$ and boundedness of $v$.
\end{proof}

\subsection{Expansion of the edge dissipation}

Let $j=i+1$ and consider edge $i<n$.  In the normalized variables of \eqref{eq:rs},
$$
r=\varepsilon w_i+O(\varepsilon^2),
\qquad
s=\varepsilon\theta(e_j)w_j+O(\varepsilon^2).
$$
Since
$$
h(b+\Delta)
=h(b)-g'(b)\Delta-\frac12g''(b)\Delta^2+O(\Delta^3),
$$
we have
$$
A_b(s)
=1-s-\frac{g''(b)h(b)}{2[g'(b)]^2}s^2+O(s^3).
$$
Substituting in \eqref{eq:dnormalized1} gives the following expansion.

\begin{lemma}[Second-order edge expansion]\label{lem:edgeexpansion}
For $i=0,\dots,n-1$ and $j=i+1$,
\begin{align}
\frac{d_i}{c_i}
=&\,\varepsilon^2\Bigg[
 w_i^2-\theta(e_j)w_iw_j
+\left(
\theta(e_j)^2
+\frac{e_j^2g''(e_j)}{2[1-g(e_j)]}
\right)w_j^2
\Bigg]
+O(\varepsilon^3),
\label{eq:edgeexpansion}
\end{align}
with the convention $w_0=0$.  For the last edge,
$$
\frac{d_n}{c_n}=\varepsilon^2w_n^2+O(\varepsilon^3).
$$
\end{lemma}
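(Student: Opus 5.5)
The plan is to start from the exact normalized identity \eqref{eq:dnormalized1} of Lemma~\ref{lem:normalizedd},
$$
\frac{d_i}{c_i}=1-A_b(s)(1+r)(1-r+s),
$$
with $b=e_j$. We expand $A_b$ to second order in $s$, insert the leading-order expressions for $r$ and $s$, and read off the quadratic form. Uniformity in $t$ of all $O(\cdot)$ terms comes from Lemma~\ref{lem:uniformremainder}(ii). There the orbit remainder is $O(\varepsilon^2)$ uniformly, and $D_{e_j}(r,s)$ is $C^3$ near the origin because $g\in C^3$.

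\emph{Step 1: expansion of $A_b$.} From $h(b+\Delta)=h(b)-g'(b)\Delta-\tfrac12 g''(b)\Delta^2+O(\Delta^3)$ and $\Delta=y-b=h(b)s/g'(b)$, dividing by $h(b)$ gives
$$
A_b(s)=1-s-\kappa s^2+O(s^3),\qquad \kappa:=\frac{g''(b)h(b)}{2[g'(b)]^2}.
$$

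\emph{Step 2: the quadratic polynomial.} Expand $(1+r)(1-r+s)=1+s+rs-r^2$. Multiplying by $A_b$ and keeping terms of total degree at most two gives
$$
A_b(1+r)(1-r+s)=1+s+rs-r^2-s-s^2-\kappa s^2+O(3).
$$
Hence the constant and linear terms cancel, and
$$
D^{(2)}=r^2-rs+(1+\kappa)s^2.
$$
This matches the polynomial recorded in the proof of Lemma~\ref{lem:uniformremainder}.

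\emph{Step 3: substitution.} By \eqref{eq:stateperturb} and \eqref{eq:rs}, we have $r=(\gamma_i-e_i)/e_i=\varepsilon w_i+O(\varepsilon^2)$. Likewise
$$
s=\frac{g'(e_j)}{h(e_j)}\,\varepsilon e_jw_j+O(\varepsilon^2)=\varepsilon\theta(e_j)w_j+O(\varepsilon^2).
$$
Then $r^2-rs$ contributes $\varepsilon^2\bigl(w_i^2-\theta(e_j)w_iw_j\bigr)$. The $s^2$ coefficient contributes $\varepsilon^2(1+\kappa)\theta(e_j)^2w_j^2$, where
$$
\kappa\,\theta(e_j)^2=\frac{g''h}{2g'^2}\cdot\frac{e_j^2g'^2}{h^2}=\frac{e_j^2g''(e_j)}{2[1-g(e_j)]}.
$$
This gives exactly the bracket in \eqref{eq:edgeexpansion}. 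The $O(\varepsilon^2)$ corrections in $r$ and $s$ only enter at order $\varepsilon^3$, because $D^{(2)}$ is quadratic and the cubic Taylor remainder is bounded by $C(|r|+|s|)^3$.

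\emph{Step 4: boundary edges.} For the entrance edge $i=0$ we have $r=0$, consistent with $w_0=0$. For the exit edge, Lemma~\ref{lem:normalizedd} gives $d_n/c_n=z_n^2/e_n^2=(\varepsilon w_n+O(\varepsilon^2))^2$, which is $\varepsilon^2w_n^2+O(\varepsilon^3)$.

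There is no genuine obstacle here. The only delicate point is bookkeeping the $s^2$ coefficient, namely that the $-s\cdot s$ cross term combines with $-\kappa s^2$ to give $1+\kappa$. The remaining care is ensuring that the remainders are uniform, and that is already supplied by Lemma~\ref{lem:uniformremainder}.
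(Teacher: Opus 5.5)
Your proposal is correct and follows the paper's proof essentially step for step. You expand $A_b(s)=1-s-\beta s^2+O(s^3)$, multiply it into $(1+r)(1-r+s)$ to obtain $r^2-rs+(1+\beta)s^2$, substitute $r=\varepsilon w_i+O(\varepsilon^2)$ and $s=\varepsilon\theta(e_j)w_j+O(\varepsilon^2)$, use $\beta\theta(e_j)^2=e_j^2g''(e_j)/(2[1-g(e_j)])$, and read the exit edge off the exact formula $d_n=z_n^2/e_n$; your remarks on uniformity in $t$ simply invoke what Lemma~\ref{lem:uniformremainder} already supplies.
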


\begin{proof}
From \eqref{eq:dnormalized1},
$$
\frac{d_i}{c_i}=1-A_b(s)(1+r)(1-r+s).
$$
Now
$$
(1+r)(1-r+s)=1+s-r^2+rs,
$$
and
$$
A_b(s)=1-s-\beta s^2+O(s^3),
\qquad
\beta=\frac{g''(b)h(b)}{2[g'(b)]^2}.
$$
Multiplication gives
$$
\frac{d_i}{c_i}=r^2-rs+(1+\beta)s^2+O(|r|+|s|)^3.
$$
Using $s=\varepsilon\theta(b)w_j+O(\varepsilon^2)$ and
$$
\beta\theta(b)^2
=\frac{b^2g''(b)}{2h(b)}
$$
yields \eqref{eq:edgeexpansion}.  The last edge follows from \eqref{eq:lastd}.
\end{proof}

The terms multiplying $w_j^2$ from the two adjacent edges combine into the scalar curvature quantity $\chi(e_j)$.

\begin{theorem}[Jacobi second variation]\label{thm:jacobi}
Let $p_i$ be the weights \eqref{eq:weights} associated with the equilibrium $e$.  Then
\begin{equation}
{
\GOE(u^\varepsilon)
=-\varepsilon^2R_C\av{w(t)^TH_n(e)w(t)}+\mathcal R_3(\varepsilon),
}
\label{eq:jacobisecond}
\end{equation}
where $w=(w_1,\dots,w_n)^T$ and $H_n(e)$ is the symmetric tridiagonal matrix
\begin{equation}
H_n(e)=
\begin{pmatrix}
p_1\chi_1&-p_2/2&0&\cdots&0\\
-p_2/2&p_2\chi_2&-p_3/2&\ddots&\vdots\\
0&-p_3/2&p_3\chi_3&\ddots&0\\
\vdots&\ddots&\ddots&\ddots&-p_n/2\\
0&\cdots&0&-p_n/2&p_n\chi_n
\end{pmatrix},
\label{eq:Hmatrix}
\end{equation}
with
$$
\chi_j:=\chi(e_j),
$$
and the remainder satisfies
$$
|\mathcal R_3(\varepsilon)|\le C|\varepsilon|^3
$$
for $|\varepsilon|\le\varepsilon_0$, with $C,\varepsilon_0$ as in
Lemma~\ref{lem:uniformremainder}.
\end{theorem}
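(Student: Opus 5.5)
The plan is to start from the exact identity of Theorem~\ref{thm:dissipation}, $\GOE(u^\varepsilon)=-\sum_{i=0}^n p_i\av{u_i^\varepsilon d_i^\varepsilon}$, and insert the uniform second-order expansions of Lemma~\ref{lem:uniformremainder}(iii), equivalently Lemma~\ref{lem:edgeexpansion}. Since $d_i^\varepsilon=c_i\cdot O(\varepsilon^2)$ uniformly in $t$, we may replace $u_i^\varepsilon$ by $\lambda_i$ at the cost of an $O(\varepsilon^3)$ error. Then $\lambda_ic_i=R_C$ from \eqref{eq:RC} gives
$$
\GOE(u^\varepsilon)=-\varepsilon^2R_C\sum_{i=0}^n p_i\av{Q_i(w)}+O(|\varepsilon|^3).
$$
Here $Q_i$ denotes the quadratic form in square brackets in \eqref{eq:edgeexpansion} for $i<n$, and $Q_n=w_n^2$. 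The uniformity in $t$ of all remainders is supplied by Lemma~\ref{lem:uniformremainder}, so $\mathcal R_3$ inherits the bound $C|\varepsilon|^3$ with the same constants.

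What remains is purely algebraic: identify $\sum_i p_iQ_i(w)$ with $w^TH_n(e)w$.

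\textbf{Off-diagonal terms.} The cross term of edge $i$ is $-p_i\theta(e_{i+1})w_iw_{i+1}$. By the weight recursion \eqref{eq:weights} this equals $-p_{i+1}w_iw_{i+1}$, which is exactly the contribution $2\cdot(-p_{i+1}/2)w_iw_{i+1}$ of the symmetric off-diagonal entries. For $i=0$ the term vanishes because $w_0=0$.

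\textbf{Diagonal terms.} For $1\le j\le n$, the coefficient of $w_j^2$ collects two pieces:
\begin{itemize}
\item $p_j$, from edge $j$, namely from the $w_i^2$ term with $i=j$, or from $Q_n$ when $j=n$;
\item $p_{j-1}\bigl(\theta(e_j)^2+e_j^2g''(e_j)/(2h(e_j))\bigr)$, from edge $j-1$.
\end{itemize}
Using $p_{j-1}=p_j/\theta(e_j)$, the second piece becomes
$$
p_j\Bigl(\theta(e_j)+\frac{e_j^2g''(e_j)}{2h(e_j)\theta(e_j)}\Bigr)
=p_j\Bigl(\frac{e_jg'(e_j)}{h(e_j)}+\frac{e_jg''(e_j)}{2g'(e_j)}\Bigr).
$$
Adding the first piece $p_j$ gives exactly $p_j\chi(e_j)$ by \eqref{eq:chi}.

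No edge is double counted: edge $0$ contributes only through $w_1^2$ (its $w_0$ terms vanish), and the exit edge contributes only $p_nw_n^2$. Hence $\sum_ip_iQ_i=w^TH_nw$ pointwise in $t$; averaging gives \eqref{eq:jacobisecond}.

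The main obstacle is already handled by Lemma~\ref{lem:uniformremainder}: the uniform $O(\varepsilon^3)$ control of both the orbit and the dissipation expansions. The only delicate point left is keeping track of the boundary edges, and checking that the factor $u_i^\varepsilon=\lambda_i+\varepsilon v_i$ contributes only at order $\varepsilon^3$. The latter holds because the $\varepsilon v_i$ part multiplies an $O(\varepsilon^2)$ quantity.
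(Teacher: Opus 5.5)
Your proposal is correct and follows essentially the same route as the paper. You insert the uniform edge expansions of Lemma~\ref{lem:uniformremainder} into the exact identity of Theorem~\ref{thm:dissipation} and use $\lambda_ic_i=R_C$ (absorbing the $\varepsilon v_i$ factor into the $O(\varepsilon^3)$ remainder), then collect terms via $p_j=p_{j-1}\theta(e_j)$ to get $-p_j/2$ off the diagonal and $p_j\chi(e_j)$ on it.
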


\begin{proof}
By Lemma~\ref{lem:uniformremainder}, the edge expansions hold with a
remainder bounded by $C|\varepsilon|^3$ uniformly in time.  Combining these
bounds with Theorem~\ref{thm:dissipation} and $\lambda_i c_i=R_C$ yields
$$
\GOE(u^\varepsilon)
=-\varepsilon^2R_C\av{Q_e(w)}+\mathcal R_3(\varepsilon),
\qquad
|\mathcal R_3(\varepsilon)|\le C|\varepsilon|^3.
$$
where each edge $i=j-1$ contributes
$$
p_{j-1}\left[-\theta(e_j)w_{j-1}w_j
+\left(\theta(e_j)^2+\frac{e_j^2g''(e_j)}{2h(e_j)}\right)w_j^2\right],
$$
and edge $j$ contributes $p_jw_j^2$.  Since
$$
p_j=p_{j-1}\theta(e_j),
$$
the cross term is $-p_jw_{j-1}w_j$.  The diagonal coefficient becomes
\begin{align*}
p_j
&+p_{j-1}\theta(e_j)^2
+p_{j-1}\frac{e_j^2g''(e_j)}{2h(e_j)}\\
&=p_j\left[
1+\theta(e_j)+\frac{e_jg''(e_j)}{2g'(e_j)}
\right]
=p_j\chi(e_j).
\end{align*}
Therefore
$$
Q_e(w)
=\sum_{j=1}^np_j\chi_jw_j^2
-\sum_{j=2}^np_jw_{j-1}w_j
=w^TH_n(e)w,
$$
which proves \eqref{eq:jacobisecond}.
\end{proof}

The preceding theorem gives the quadratic form, but a negative direction matters for GOE only if it can actually be generated by a zero-mean periodic rate perturbation.  The path-graph control structure implies full realizability.

\begin{theorem}[Realizability of negative Jacobi modes]\label{thm:realizability}
Let $H_n(e)$ be the matrix in Theorem~\ref{thm:jacobi}.  The following are equivalent:
\begin{enumerate}[label=\textup{(\roman*)}]
\item $H_n(e)$ has a negative eigenvalue;
\item there exists a zero-mean $T$-periodic perturbation $v=(v_0,\dots,v_n)$ such that the corresponding family \eqref{eq:controlperturb} has positive second-order GOE.
\end{enumerate}
More precisely, if $\xi^TH_n(e)\xi<0$, then one can choose a periodic perturbation for which
\begin{equation}
\GOE(u^\varepsilon)
=-\frac{\varepsilon^2R_C}{2}\xi^TH_n(e)\xi+O(\varepsilon^3)>0
\label{eq:positivequadratic}
\end{equation}
for all sufficiently small nonzero $\varepsilon$.
\end{theorem}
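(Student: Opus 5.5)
The plan is to reduce both directions to Theorem~\ref{thm:jacobi} together with an inverse (flatness) construction along the path graph. The implication (ii)$\Rightarrow$(i) is the easy one. If $H_n(e)$ has no negative eigenvalue, then $w^TH_n(e)w\ge0$ pointwise in $t$. Formula \eqref{eq:jacobisecond} then gives $\GOE(u^\varepsilon)\le C|\varepsilon|^3$ for every zero-mean perturbation, so the second-order coefficient $-R_C\av{w^TH_nw}$ is never positive. Hence no family has positive second-order GOE.

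For (i)$\Rightarrow$(ii), fix $\xi$ with $\xi^TH_n(e)\xi<0$, for instance an eigenvector for the negative eigenvalue. The target is a first-order state response
$$
w(t)=\xi\cos\frac{2\pi t}{T}.
$$
Since $\av{\cos^2}=1/2$, this gives $\av{w^TH_nw}=\tfrac12\xi^TH_n\xi$, and \eqref{eq:positivequadratic} then follows from Theorem~\ref{thm:jacobi}. It therefore suffices to show that every smooth $T$-periodic $y=(e_jw_j)$ is realized as the periodic solution $y$ of the linearized equation $\dot y=Ay+Gv$ for some zero-mean continuous periodic $v$. The periodic solution of the linearized system is unique, because $A$ is Hurwitz (as used in Lemma~\ref{lem:uniformremainder}). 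So once $v$ is chosen to make our $y$ a periodic solution, it is \emph{the} derivative $\partial_\varepsilon\gamma^\varepsilon|_{\varepsilon=0}$.

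To realize $y$, I will differentiate \eqref{eq:nlrfm} in $\varepsilon$ at the equilibrium, which gives
$$
\dot y_i=\ell_{i-1}-\ell_i,\qquad \ell_i=v_ic_i+\lambda_i\bigl(h(e_{i+1})y_i-e_ig'(e_{i+1})y_{i+1}\bigr)=v_ic_i+\lambda_i\eta_i[y],
$$
with $\ell_i$ the linearized edge flow. The flows are then solved for successively. Set $\ell_n:=\ell$, an arbitrary periodic function to be fixed, and define $\ell_{i-1}=\ell_i+\dot y_i$ downward to $\ell_0$. Each $v_i=(\ell_i-\lambda_i\eta_i[y])/c_i$ is then determined and continuous, because $c_i>0$.

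The main point, and the only real obstacle, is the zero-mean constraint on all $n+1$ controls. Here $\av{\dot y_i}=0$ gives $\av{\ell_i}=\av{\ell}$ for all $i$, and $\av{\eta_i[y]}=0$ because $\av{\cos}=0$. So choosing $\ell$ with $\av{\ell}=0$, for example $\ell\equiv0$, yields $\av{v_i}=0$ for every $i$. Smallness of $\varepsilon$ keeps $u^\varepsilon$ positive.

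Finally, Lemma~\ref{lem:uniformremainder} converts the exact second-order coefficient into
$$
\GOE(u^\varepsilon)=-\tfrac{\varepsilon^2R_C}{2}\xi^TH_n\xi+O(\varepsilon^3)>0
$$
for small $\varepsilon\ne0$. This is (ii) together with \eqref{eq:positivequadratic}. The remainder is uniform, so the sign holds for all sufficiently small nonzero $\varepsilon$, not just along a subsequence.
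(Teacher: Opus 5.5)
Your proposal is correct and is essentially the paper's proof. It uses the same target $w=\xi\cos(2\pi t/T)$, the same identification of $y$ with $\partial_\varepsilon\gamma^\varepsilon|_{\varepsilon=0}$ through invertibility of $e^{AT}-I$, and the same appeal to Theorem~\ref{thm:jacobi} in both directions; your backward edge-flow recursion simply gives an explicit solution of $Gv=\dot y-Ay$ in place of the paper's abstract right inverse $G^R$.

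One wording fix: only zero-mean $y$ can be realized with zero-mean $v$, since averaging the linearized equation gives $A\av{y}=0$. So ``every smooth $T$-periodic $y$'' should be restricted to zero-mean $y$, which is all your $\av{\cos}=0$ check actually uses.
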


\begin{proof}
Linearization at the equilibrium and constant rates gives
\begin{equation}
\dot y=Ay+Gv,
\label{eq:linearcontrol}
\end{equation}
where
$$
A=D_xf(e,\lambda),
\qquad
G=D_uf(e,\lambda).
$$
The $i$th rate perturbs the $i$th edge flow, so
$$
G=B\diag(c_0,\dots,c_n),
$$
where $B$ is the node-edge incidence matrix of the directed path with $n$ nodes and $n+1$ boundary/internal edges.  Hence
$$
\operatorname{rank}G=n.
$$
Choose a right inverse $G^R$.

Given $\xi\in\R^n$, set
$$
y(t)=\diag(e_1,\dots,e_n)\xi\cos(\omega t),
\qquad
\omega=\frac{2\pi k}{T},\ k\in\N.
$$
Define
\begin{equation}
v(t):=G^R\bigl(\dot y(t)-Ay(t)\bigr).
\label{eq:controlrightinverse}
\end{equation}
Then \eqref{eq:linearcontrol} holds and
$$
\av{v}=G^R(\av{\dot y}-A\av{y})=0.
$$
For the autonomous system, choose an interior strip $K_\delta$ containing $e$ in its interior and apply Lemma~\ref{lem:strictcontraction} to nearby trajectories.  Differentiating the resulting finite-time contraction estimate at the equilibrium gives $\|e^{A\tau}\|_1<1$ for some $\tau>0$, and hence the spectral bound of $A$ is strictly negative.  In particular, $1\notin\sigma(e^{AT})$.  The periodic fixed-point equation is therefore locally solvable by the implicit function theorem, and the nonlinear periodic orbit generated by $u^\varepsilon=\lambda+\varepsilon v$ has first variation $y$.  In the notation of \eqref{eq:stateperturb},
$$
w(t)=\xi\cos(\omega t).
$$
Substituting this into \eqref{eq:jacobisecond} gives
$$
\GOE(u^\varepsilon)
=-\varepsilon^2R_C\av{\cos^2(\omega t)}\xi^TH_n(e)\xi+O(\varepsilon^3),
$$
which is \eqref{eq:positivequadratic} because $\av{\cos^2}=1/2$.

If $H_n(e)\ge0$, then \eqref{eq:jacobisecond} is nonpositive at second order for every periodic first variation.  This proves the equivalence.
\end{proof}

\begin{remark}
Equation \eqref{eq:phisecondchi} shows that $\chi$ is the scalar curvature controlling the one-site problem.  The higher-dimensional Jacobi matrix adds negative nearest-neighbor couplings.  Thus a chain may acquire a negative mode even when every $\chi(e_j)$ is positive.  This is the mechanism behind dimension-induced GOE.
\end{remark}

\section{Riccati criteria and dimension-induced GOE}\label{sec:riccati}

The sign of the second variation can be characterized exactly by a scalar Riccati recurrence.  This turns a dynamical GOE question into a one-dimensional positivity problem for a Jacobi matrix.

Let
$$
P:=\diag(p_1,\dots,p_n).
$$
By congruence,
$$
H_n>0
\quad\Longleftrightarrow\quad
K_n:=P^{-1/2}H_nP^{-1/2}>0.
$$
Using $p_j/p_{j-1}=\theta(e_j)$,
\begin{equation}
K_n=
\begin{pmatrix}
\chi_1&-\sqrt{\theta_2}/2&0&\cdots&0\\
-\sqrt{\theta_2}/2&\chi_2&-\sqrt{\theta_3}/2&\ddots&\vdots\\
0&-\sqrt{\theta_3}/2&\chi_3&\ddots&0\\
\vdots&\ddots&\ddots&\ddots&-\sqrt{\theta_n}/2\\
0&\cdots&0&-\sqrt{\theta_n}/2&\chi_n
\end{pmatrix},
\label{eq:Kmatrix}
\end{equation}
where $\theta_j=\theta(e_j)$.

\begin{theorem}[Exact Riccati positivity criterion]\label{thm:riccati}
Define
\begin{equation}
r_1:=\chi(e_1),
\qquad
r_k:=\chi(e_k)-\frac{\theta(e_k)}{4r_{k-1}},
\quad k=2,\dots,n,
\label{eq:riccatirec}
\end{equation}
whenever the preceding denominators are nonzero.  Then
\begin{equation}
{
H_n(e)>0
\quad\Longleftrightarrow\quad
r_1>0,\dots,r_n>0.
}
\label{eq:riccatiequiv}
\end{equation}
Equivalently, if
$$
D_0:=1,\qquad D_1:=\chi(e_1),
$$
and
\begin{equation}
D_k=\chi(e_k)D_{k-1}-\frac{\theta(e_k)}4D_{k-2},
\qquad k\ge2,
\label{eq:detrec}
\end{equation}
then
$$
H_n(e)>0
\quad\Longleftrightarrow\quad
D_k>0\quad(k=1,\dots,n).
$$
\end{theorem}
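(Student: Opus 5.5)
The plan is to pass to the congruent matrix $K_n=P^{-1/2}H_nP^{-1/2}$ of \eqref{eq:Kmatrix}. Congruence by the positive diagonal matrix $P^{-1/2}$ preserves positive definiteness, and by Sylvester's law of inertia it preserves the full inertia. So it suffices to prove the equivalences for $K_n$. Its diagonal entries are $\chi_k$ and its off-diagonal entries are $-\sqrt{\theta_k}/2$, with $\theta_k=\theta(e_k)>0$.

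The determinant statement comes first. Let $D_k$ denote the $k$th leading principal minor of $K_n$, with $D_0=1$. Expanding $\det$ of the leading $k\times k$ block along its last row gives the standard three-term recurrence for tridiagonal determinants:
$$
D_k=\chi_kD_{k-1}-\Bigl(\tfrac{\sqrt{\theta_k}}{2}\Bigr)^2D_{k-2}.
$$
This is exactly \eqref{eq:detrec}. Sylvester's criterion then gives $K_n>0$ if and only if $D_k>0$ for $k=1,\dots,n$. (The leading minors of $H_n$ equal $p_1\cdots p_k\,D_k$, so the criterion transfers directly to $H_n$ as well.)

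Next I relate the Riccati quantities to the minors. I will show by induction that, whenever $D_1,\dots,D_{k-1}\neq0$, the $r_k$ are defined and satisfy $r_k=D_k/D_{k-1}$. The base case is $r_1=\chi_1=D_1/D_0$. For the inductive step, divide \eqref{eq:detrec} by $D_{k-1}$ and use $D_{k-2}/D_{k-1}=1/r_{k-1}$; this gives $r_k=\chi_k-\theta_k/(4r_{k-1})$, matching \eqref{eq:riccatirec}. Equivalently, $r_k$ are the pivots of the $LDL^T$ (Gaussian elimination) factorization of $K_n$, which could be presented instead.

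It follows that $D_k=r_1\cdots r_k$. If all $r_k>0$, then all $D_k>0$. Conversely, if all $D_k>0$, then every denominator is nonzero and $r_k=D_k/D_{k-1}>0$. This proves \eqref{eq:riccatiequiv}.

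The only delicate point is the phrase "whenever the preceding denominators are nonzero." I would handle it by reading the statement on the right as requiring that the recursion runs to $n$ with all values positive. If some $r_{k-1}$ vanishes or is negative, the right-hand side fails. The left-hand side fails too, because then $D_{k-1}=r_1\cdots r_{k-1}\le 0$, so $K_n$ is not positive definite by Sylvester's criterion. This is a matter of bookkeeping rather than a real obstacle.
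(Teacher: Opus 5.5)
Your proposal is correct and follows essentially the paper's argument. Both pass to the congruent matrix $K_n$, use the three-term recurrence for its leading minors, and identify $r_k=D_k/D_{k-1}$ as the $LDL^T$ pivots. The paper invokes pivot positivity directly, while you go through Sylvester's criterion and $D_k=r_1\cdots r_k$; your route also handles the degenerate-denominator case more explicitly than the paper does.
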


\begin{proof}
The matrix $K_n$ is symmetric tridiagonal.  In its $LDL^T$ factorization, the successive pivots are
$$
r_1=\chi_1,
$$
and
$$
r_k=\chi_k-\frac{(\sqrt{\theta_k}/2)^2}{r_{k-1}}
=\chi_k-\frac{\theta_k}{4r_{k-1}}.
$$
A symmetric matrix is positive definite if and only if all its $LDL^T$ pivots are positive.  Since $H_n$ and $K_n$ are congruent, \eqref{eq:riccatiequiv} follows.  The determinant recurrence is the standard recurrence for the leading principal minors of a Jacobi matrix, and $r_k=D_k/D_{k-1}$ whenever all preceding minors are nonzero.
\end{proof}

\subsection{Uniformly heterogeneous chains}

Fix a compact interval
$$
K\Subset(0,1).
$$
We ask for a condition ensuring positive second variation for \emph{every} chain length and every choice $e_j\in K$.

\begin{theorem}[Dimension-uniform heterogeneous criterion]\label{thm:uniformbarrier}
The following statements are equivalent:
\begin{enumerate}[label=\textup{(\roman*)}]
\item for every $n\ge1$ and every $e_1,\dots,e_n\in K$, the Jacobi matrix $H_n(e)$ is positive definite;
\item there exists $a>0$ such that
\begin{equation}
{
\chi(x)\ge a+\frac{\theta(x)}{4a}
\qquad\text{for every }x\in K.
}
\label{eq:barrier}
\end{equation}
\end{enumerate}
\end{theorem}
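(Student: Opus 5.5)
The plan is to read condition \eqref{eq:barrier} as a statement about the Riccati maps of Theorem~\ref{thm:riccati}. For $x\in K$ and $r>0$ define
$$
F_x(r):=\chi(x)-\frac{\theta(x)}{4r},
$$
so that \eqref{eq:riccatirec} reads $r_k=F_{e_k}(r_{k-1})$. Condition (ii) says exactly that $F_x(a)\ge a$ for all $x\in K$: the number $a$ is a common sub-fixed point of all these maps. Both directions then reduce, through Theorem~\ref{thm:riccati}, to tracking the pivots $r_k$. Throughout I use that $\chi$ and $\theta$ are continuous on the compact set $K\Subset(0,1)$ (here $g\in C^2$ suffices). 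I also use that $\theta>0$ on $K$, so $\theta_{\min}:=\min_K\theta>0$, and that each $F_x$ is strictly increasing in $r$.

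\emph{(ii)$\Rightarrow$(i).} I will show by induction that $r_k\ge a$ for every chain with all $e_j\in K$. The base case is $r_1=\chi(e_1)\ge a+\theta(e_1)/(4a)>a$. For the inductive step, if $r_{k-1}\ge a>0$, then monotonicity gives
$$
r_k=F_{e_k}(r_{k-1})\ge F_{e_k}(a)\ge a.
$$
All pivots are therefore positive, the denominators never vanish, and Theorem~\ref{thm:riccati} gives $H_n(e)>0$ for every $n$.

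\emph{(i)$\Rightarrow$(ii), by contraposition.} Suppose that for every $a>0$ some $x\in K$ satisfies $F_x(a)<a$. Set
$$
m(r):=\min_{x\in K}F_x(r),\qquad r>0.
$$
The minimum is attained by compactness, $m$ is continuous and nondecreasing, and by assumption $m(a)<a$ for all $a>0$. Moreover $m(r)\le \max_K\chi-\theta_{\min}/(4r)\to-\infty$ as $r\to0^+$. I will build an adversarial chain greedily. First pick $e_1\in K$ minimizing $\chi$. If $r_1=\chi(e_1)\le0$, then $H_1$ is not positive definite and we are done. Otherwise, at each later step pick $e_k$ attaining $m(r_{k-1})$, so that $r_k=m(r_{k-1})<r_{k-1}$.

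It remains to show that this strictly decreasing sequence cannot stay positive forever. Suppose it does, and let $L\ge0$ be its limit. If $L>0$, continuity of $m$ gives $m(L)=L$, contradicting $m(L)<L$. If $L=0$, then $r_{k+1}=m(r_k)\to-\infty$, contradicting $r_{k+1}>0$. Hence some first index $N$ has $r_N\le0$, with all earlier pivots positive. Then $D_N=r_ND_{N-1}\le0$, and Theorem~\ref{thm:riccati} shows that $H_N(e)$ is not positive definite for this chain with all $e_j\in K$. This contradicts (i).

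The main subtlety is this converse: the negation of (ii) is only a pointwise statement, and it must be converted into a single finite chain. Compactness, which makes $m$ continuous with $m(a)<a$ strictly, together with the blow-down of $m$ at $0$, is what excludes a positive limit and also excludes creeping down to $0$.
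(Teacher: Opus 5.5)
Your proposal is correct and follows essentially the same argument as the paper. The forward direction is the same induction $r_k\ge a$, and your greedy chain produces exactly the paper's worst-pivot sequence $a_{m+1}=\mathcal T(a_m)$, with the same limit dichotomy (a positive limit is a fixed point, and a zero limit is excluded by the $-\theta_*/(4r)$ blow-down). The only difference is presentational: the paper argues directly and obtains the explicit barrier value $a_*=\lim a_m$, while you argue by contraposition, which gives strict decrease of the pivots for free instead of deriving $a_{m+1}\le a_m$ from $a_2<a_1$ and monotonicity of $\mathcal T$.
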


\begin{proof}
Assume \eqref{eq:barrier}.  Then $r_1=\chi(e_1)>a$.  If $r_{k-1}\ge a$, the Riccati recurrence gives
$$
r_k
=\chi(e_k)-\frac{\theta(e_k)}{4r_{k-1}}
\ge\chi(e_k)-\frac{\theta(e_k)}{4a}
\ge a.
$$
Thus all pivots are positive for every length and every heterogeneous chain.

Conversely, define the worst possible pivots recursively by
\begin{equation}
a_1:=\min_{x\in K}\chi(x),
\qquad
a_{m+1}:=\min_{x\in K}\left[\chi(x)-\frac{\theta(x)}{4a_m}\right].
\label{eq:worstpivots}
\end{equation}
Because $r\mapsto\chi(x)-\theta(x)/(4r)$ is strictly increasing for $r>0$, induction shows that $a_m$ is the infimum of the $m$th pivot among all length-$m$ chains with entries in $K$.  Assumption (i) therefore implies $a_m>0$ for every $m$.

Since $a_2<a_1$ and the map
$$
\mathcal T(r):=\min_{x\in K}\left[\chi(x)-\frac{\theta(x)}{4r}\right]
$$
is nondecreasing on $(0,\infty)$, induction gives $a_{m+1}\le a_m$ for all $m$.  Hence $a_m$ has a nonnegative limit.  Since $K\Subset(0,1)$,
$$
\theta_*:=\min_{x\in K}\theta(x)>0,
$$
and $\chi$ is bounded above on $K$.  The sequence cannot tend to zero, because then the negative term $-\theta_*/(4a_m)$ in \eqref{eq:worstpivots} would force $a_{m+1}<0$.  Hence
$$
a_m\downarrow a_*>0.
$$
Continuity and compactness allow passage to the limit in \eqref{eq:worstpivots}:
$$
a_*=\min_{x\in K}\left[\chi(x)-\frac{\theta(x)}{4a_*}\right].
$$
Thus
$$
\chi(x)\ge a_*+\frac{\theta(x)}{4a_*}
$$
for all $x\in K$, proving (ii).
\end{proof}

The barrier can be written as an interval-intersection condition.  For fixed $x$, \eqref{eq:barrier} is equivalent to
$$
4a^2-4\chi(x)a+\theta(x)\le0.
$$
When $\chi(x)^2\ge\theta(x)$, define
\begin{equation}
a_\pm(x)
:=\frac{\chi(x)\pm\sqrt{\chi(x)^2-\theta(x)}}2.
\label{eq:apm}
\end{equation}

\begin{corollary}[Interval-intersection form]\label{cor:intersection}
Condition \eqref{eq:barrier} is equivalent to
\begin{equation}
{
\bigcap_{x\in K}[a_-(x),a_+(x)]\ne\varnothing,
}
\label{eq:intersection}
\end{equation}
or, equivalently,
$$
\sup_{x\in K}a_-(x)\le\inf_{x\in K}a_+(x).
$$
\end{corollary}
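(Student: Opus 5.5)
The plan is to work pointwise in $x$ and convert \eqref{eq:barrier} into a statement about quadratic inequalities in $a$. First, for each fixed $x\in K$ and $a>0$, multiplying \eqref{eq:barrier} by $4a>0$ shows that $\chi(x)\ge a+\theta(x)/(4a)$ is equivalent to
$$
q_x(a):=4a^2-4\chi(x)a+\theta(x)\le0 .
$$
Since $q_x$ is an upward parabola, the set $\{a\in\R: q_x(a)\le0\}$ is nonempty exactly when the discriminant $16\chi(x)^2-16\theta(x)\ge0$, that is, when $\chi(x)^2\ge\theta(x)$. In that case it equals the closed interval $[a_-(x),a_+(x)]$ with the endpoints \eqref{eq:apm}; otherwise it is empty.

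The second step is to remove the restriction $a>0$. Because $\theta(x)>0$ on $K\Subset(0,1)$, the product of the roots $a_-(x)a_+(x)=\theta(x)/4$ is positive. Whenever the interval $[a_-(x),a_+(x)]$ is nonempty, its two endpoints therefore have the same sign. The interval meets the point $a=0$ only if $q_x(0)=\theta(x)\le0$, which is impossible. Moreover, if $\chi(x)<0$ both roots would be negative. One remark is needed here: on the interval-intersection side, points $a\le0$ can in principle occur. In that case I will use the observation that all intervals containing a common point $a$ must have endpoints of the sign of $a$. Hence any common point of $\bigcap_x[a_-(x),a_+(x)]$ is automatically nonzero, and if negative it would violate nothing algebraic. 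I therefore expect to need the extra fact that $a_\pm(x)>0$ for each $x$, i.e.\ $\chi(x)>0$.

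This sign issue is the main obstacle, and I would handle it as follows. In the direction \eqref{eq:barrier}$\Rightarrow$\eqref{eq:intersection}, the given $a>0$ lies in every interval, which is immediate. In the converse direction, a common point $a$ satisfies $q_x(a)\le0$ for all $x$, and $a\neq0$. If $a<0$, then $\chi(x)\le a+\theta(x)/(4a)<0$ for all $x$ by dividing by $4a<0$. In that case I would observe that the statement is meant with positive intervals, as in the context of Theorem~\ref{thm:uniformbarrier}, where $\chi>0$ necessarily holds. I would state that the intersection is taken among the positive-root intervals (those with $\chi(x)>0$), so that $a>0$ and dividing $q_x(a)\le0$ by $4a$ recovers \eqref{eq:barrier}.

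Finally, the sup/inf form follows from the elementary fact that a family of compact intervals $[\alpha_x,\beta_x]$ has nonempty intersection if and only if $\sup_x\alpha_x\le\inf_x\beta_x$. For necessity, a common point lies between every $\alpha_x$ and every $\beta_y$. For sufficiency, any point of $[\sup\alpha,\inf\beta]$ is a common point. Here the supremum and infimum are finite because $\chi$ and $\theta$ are continuous on the compact set $K$.
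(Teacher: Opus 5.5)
Your route is the paper's route. The paper's only justification is the remark before the corollary: for fixed $x$, \eqref{eq:barrier} is equivalent to $4a^2-4\chi(x)a+\theta(x)\le0$, whose solution set is $[a_-(x),a_+(x)]$, followed by the elementary sup/inf fact for families of intervals. Your forward direction and your sup/inf step are fine.

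Your sign worry is justified, and it is more than a technicality. The paper's remark is valid only for $a>0$, since multiplying by $4a$ is reversible only then, and the converse implication of the corollary is false as literally stated. Because $a_-(x)a_+(x)=\theta(x)/4>0$, an interval with $\chi(x)\le-\sqrt{\theta(x)}$ lies entirely in $(-\infty,0)$. On the other hand, \eqref{eq:barrier} forces $\chi(x)\ge a+\theta(x)/(4a)\ge\sqrt{\theta(x)}>0$.

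This case occurs for admissible hindrances. Take $g(x)=x+c\,x(1-x)(1-2x)$ with $c=1.99$, so that $g'\ge1-c/2>0$. At $x_0=0.45$ one finds $\theta(x_0)\approx0.031$ and $\chi(x_0)\approx-6.68<-\sqrt{\theta(x_0)}$. By continuity, $a_-(x)<\tfrac12\bigl(a_-(x_0)+a_+(x_0)\bigr)<a_+(x)<0$ for all $x$ in a small compact $K$ around $x_0$. So the intersection is nonempty while \eqref{eq:barrier} fails.

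Hence no argument can prove the statement as written. Your step saying the statement ``is meant with positive intervals, as in the context of Theorem~\ref{thm:uniformbarrier}, where $\chi>0$ necessarily holds'' must be recast as an explicit amendment, not a proof step. Positivity of $\chi$ on $K$ is a consequence of \eqref{eq:barrier}, so it is not available when proving the reverse implication.

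Be careful how you phrase the amendment. Intersecting only over those $x$ with $\chi(x)>0$, and discarding the others, would be wrong, because any point of $K$ with $\chi(x)\le0$ already violates \eqref{eq:barrier}. The correct version is
\[
\eqref{eq:barrier}\iff\bigcap_{x\in K}\bigl([a_-(x),a_+(x)]\cap(0,\infty)\bigr)\neq\varnothing,
\]
with the convention that the interval is empty when $\chi(x)^2<\theta(x)$. Equivalently, $\chi\ge\sqrt{\theta}$ on $K$ and $\sup_K a_-\le\inf_K a_+$. By your own same-sign observation, once all intervals are nonempty and intersect, $\chi>0$ at a single point of $K$ already suffices. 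With this correction your argument is complete.
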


The global condition in Section~\ref{sec:dissipation} has a transparent interpretation in this Riccati language.

\begin{proposition}[Global curvature as the barrier $a=1$]\label{prop:aone}
For every $x\in(0,1)$,
\begin{equation}
{
\chi(x)-1-\frac{\theta(x)}4
=
\frac{x}{4[1-g(x)]g'(x)}
\left(3[g'(x)]^2+2[1-g(x)]g''(x)\right).
}
\label{eq:aoneidentity}
\end{equation}
Hence the global convexity condition \eqref{eq:curvaturecondition} implies the dimension-uniform local barrier \eqref{eq:barrier} with $a=1$.
\end{proposition}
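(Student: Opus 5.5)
The identity \eqref{eq:aoneidentity} is a direct algebraic computation, and I will verify it by placing both sides over the common denominator $4[1-g(x)]g'(x)$. Write $h=1-g$, $g'=g'(x)$, $g''=g''(x)$. By \eqref{eq:chi} and \eqref{eq:theta},
$$
\chi(x)-1-\frac{\theta(x)}4=\frac{xg'}{h}+\frac{xg''}{2g'}-\frac{xg'}{4h}
=\frac{3xg'}{4h}+\frac{xg''}{2g'}.
$$
Multiplying the first term by $g'/g'$ and the second by $2h/2h$ gives
$$
\frac{3x[g']^2+2xhg''}{4hg'}=\frac{x}{4hg'}\bigl(3[g']^2+2hg''\bigr),
$$
which is the right-hand side of \eqref{eq:aoneidentity}. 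I expect no obstacle beyond this bookkeeping; the only points to check are that the $1$'s cancel and that $\theta/4$ combines with $\theta$ to leave $3\theta/4$.

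For the consequence, note that on $(0,1)$ we have $x>0$, $h(x)>0$ and $g'(x)>0$ by \eqref{eq:2.1}, so the prefactor $x/(4hg')$ is strictly positive. Hence \eqref{eq:curvaturecondition} gives $\chi(x)\ge 1+\theta(x)/4$ for every $x\in(0,1)$. This is exactly \eqref{eq:barrier} with $a=1$, since $a+\theta/(4a)=1+\theta/4$, and it holds on every compact $K\Subset(0,1)$.

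By Theorem~\ref{thm:uniformbarrier}, it follows that $H_n(e)>0$ for all chains with entries in $K$. Alternatively, one can see this directly from Theorem~\ref{thm:riccati}: $r_1=\chi(e_1)\ge1+\theta(e_1)/4>1$, and by induction $r_{k-1}\ge1$ implies $r_k\ge\chi(e_k)-\theta(e_k)/4\ge1$. This also justifies the inequality $\chi\ge1+\theta/4>0$ that was used in Remark~\ref{rem:globalvscalar}.
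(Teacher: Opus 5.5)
Your proof is correct and follows the paper's argument. You substitute the definitions of $\theta$ and $\chi$, combine over the common denominator $4[1-g(x)]g'(x)$, and use positivity of $x$, $1-g(x)$, and $g'(x)$ on $(0,1)$ to carry the sign of \eqref{eq:curvaturecondition} over to the barrier with $a=1$. Your additional Riccati induction showing $r_k\ge1$ is a valid direct check of the dimension-uniform positivity, though the proposition itself does not require it.
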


\begin{proof}
Substitute \eqref{eq:theta} and \eqref{eq:chi} into the left-hand side and collect terms over the denominator $4[1-g(x)]g'(x)$.
\end{proof}

\subsection{Homogeneous chains and a critical dimension}

Suppose
$$
e_1=\cdots=e_n=e.
$$
This can always be realized by a suitable constant-rate vector.  Put
$$
\theta:=\theta(e),\qquad \chi:=\chi(e).
$$
Then $K_n$ is the Toeplitz Jacobi matrix with diagonal $\chi$ and off-diagonal $-\sqrt\theta/2$.

\begin{proposition}[Exact homogeneous spectrum]\label{prop:toeplitz}
The eigenvalues of $K_n$ are
\begin{equation}
\mu_k
=\chi-\sqrt\theta\cos\frac{k\pi}{n+1},
\qquad k=1,\dots,n.
\label{eq:toeplitzeigs}
\end{equation}
Hence
\begin{equation}
\mu_{\min}(K_n)
=\chi-\sqrt\theta\cos\frac{\pi}{n+1}.
\label{eq:minspec}
\end{equation}
A positive second-order GOE direction exists if and only if
\begin{equation}
{
\chi(e)<\sqrt{\theta(e)}\cos\frac{\pi}{n+1}.
}
\label{eq:homogthreshold}
\end{equation}
\end{proposition}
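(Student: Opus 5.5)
The plan is to reduce the statement to the classical spectrum of the tridiagonal Toeplitz matrix and then invoke the congruence and realizability results already proved. In the homogeneous case, \eqref{eq:Kmatrix} becomes
$$
K_n=\chi I-\frac{\sqrt\theta}{2}\,S_n,
$$
where $S_n$ is the adjacency matrix of the path on $n$ vertices: ones on the sub- and superdiagonals, zeros elsewhere.

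The first step is to diagonalize $S_n$ explicitly. For $k=1,\dots,n$, take the vectors $\phi^{(k)}_j=\sin\frac{jk\pi}{n+1}$ for $j=1,\dots,n$. The identity
$$
\sin\frac{(j-1)k\pi}{n+1}+\sin\frac{(j+1)k\pi}{n+1}=2\cos\frac{k\pi}{n+1}\sin\frac{jk\pi}{n+1}
$$
holds for every $j$. The boundary cases $j=1$ and $j=n$ are covered because $\sin 0=0$ and $\sin(k\pi)=0$, so they impose no extra condition. Hence $S_n\phi^{(k)}=2\cos\frac{k\pi}{n+1}\,\phi^{(k)}$.

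The $n$ values $\cos\frac{k\pi}{n+1}$ are distinct, so these vectors give the full spectrum of $S_n$. It follows that $K_n$ has eigenvalues $\chi-\sqrt\theta\cos\frac{k\pi}{n+1}$, which is \eqref{eq:toeplitzeigs}. Since $\cos$ is decreasing on $(0,\pi)$ and $\theta>0$, the minimum is attained at $k=1$, which gives \eqref{eq:minspec}.

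For the GOE statement, recall that $H_n=P^{1/2}K_nP^{1/2}$ with $P>0$. By Sylvester's law of inertia, $H_n$ has a negative eigenvalue if and only if $K_n$ does. By \eqref{eq:minspec}, that happens exactly when $\chi<\sqrt\theta\cos\frac{\pi}{n+1}$.

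Theorem~\ref{thm:realizability} then says that a negative eigenvalue of $H_n(e)$ is equivalent to the existence of a zero-mean periodic perturbation with positive second-order GOE. This yields \eqref{eq:homogthreshold}.

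Two points need care. The first is the remark that any homogeneous $e$ is realizable by constant rates. For this, take the flux balance \eqref{eq:equilibriumflow} with $R=1$ and set $\lambda_0=1/h(e)$, $\lambda_i=1/(eh(e))$ for $1\le i<n$, and $\lambda_n=1/e$. The second is to interpret ``positive second-order GOE direction'' as in Theorem~\ref{thm:realizability}, namely a strictly negative eigenvalue.

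With that reading, the borderline case $\chi=\sqrt\theta\cos\frac{\pi}{n+1}$ gives only $H_n\ge0$ and is correctly excluded. I expect no real obstacle: the only substantive computation is the sine eigenvector check, and everything else is bookkeeping through congruence and realizability.
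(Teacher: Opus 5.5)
Your proposal is correct and follows the same route as the paper: the sine vectors $\sin\frac{jk\pi}{n+1}$ diagonalize the Toeplitz Jacobi matrix, the minimum eigenvalue occurs at $k=1$, and the GOE criterion follows from Theorem~\ref{thm:realizability}. You also spell out two steps the paper leaves implicit, namely the inertia-preserving congruence $H_n=P^{1/2}K_nP^{1/2}$ and the explicit constant rates that realize a homogeneous equilibrium.
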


\begin{proof}
The eigenvectors of the Toeplitz Jacobi matrix have components
$$
z_j^{(k)}=\sin\frac{jk\pi}{n+1},
$$
and direct substitution gives \eqref{eq:toeplitzeigs}.  The smallest eigenvalue corresponds to $k=1$.  The last assertion follows from Theorem~\ref{thm:realizability}.
\end{proof}

Define
$$
\varrho(e):=\frac{\chi(e)}{\sqrt{\theta(e)}}.
$$
If $0\le\varrho(e)<1$, then gain does not necessarily occur in low dimension, but it must occur once the chain is sufficiently long.

\begin{corollary}[Critical dimension]\label{cor:criticaldimension}
If $0\le\varrho(e)<1$, the smallest homogeneous dimension with a negative Jacobi mode is
\begin{equation}
{
n_{\rm crit}
=\left\lfloor\frac{\pi}{\arccos\varrho(e)}\right\rfloor.
}
\label{eq:ncrit}
\end{equation}
If $\chi(e)<0$, gain is already possible for $n=1$.  If $\chi(e)\ge\sqrt{\theta(e)}$, no finite homogeneous chain has a negative second-variation mode at the equilibrium $e$.
\end{corollary}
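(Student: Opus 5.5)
By Proposition~\ref{prop:toeplitz}, the question reduces to a scalar inequality in $n$: a negative Jacobi mode exists in dimension $n$ exactly when
$$
\chi(e)<\sqrt{\theta(e)}\cos\frac{\pi}{n+1},
\qquad\text{i.e.}\qquad \varrho(e)<\cos\frac{\pi}{n+1},
$$
after dividing by $\sqrt{\theta(e)}>0$ (note $\theta(e)>0$ since $e\in(0,1)$ and $g'>0$). All three assertions follow from this equivalence together with the monotonicity of the thresholds $c_n:=\cos(\pi/(n+1))$.

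The first step is to record that $c_n$ is strictly increasing in $n$, with $c_1=0$ and $c_n\uparrow1$. This is immediate, since $\pi/(n+1)$ decreases in $(0,\pi/2]$ and $\cos$ is strictly decreasing there. Consequently the set of dimensions admitting a negative mode, $\{n:\ \varrho<c_n\}$, is an up-set in $\N$. The critical dimension is therefore its least element, provided the set is nonempty.

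For the main case $0\le\varrho<1$, let $\alpha:=\arccos\varrho\in(0,\pi/2]$. Then $\varrho<c_n$ iff $\pi/(n+1)<\alpha$, i.e.\ $n+1>\pi/\alpha$. The smallest such integer is $n=\lfloor\pi/\alpha\rfloor$, since the least integer strictly exceeding $\pi/\alpha$ is $\lfloor\pi/\alpha\rfloor+1$ in all cases, including when $\pi/\alpha$ is an integer. This gives \eqref{eq:ncrit}. The set is nonempty because $\alpha>0$. One also checks $n_{\rm crit}\ge 2$ here, since $\alpha\le\pi/2$. This is consistent with $c_1=0\le\varrho$, which says there is no mode at $n=1$.

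The two remaining cases are direct. If $\chi(e)<0$, then $\varrho<0=c_1$, so $n=1$ already works; equivalently $\mu_{\min}(K_1)=\chi<0$, and Theorem~\ref{thm:realizability} (or directly Theorem~\ref{thm:scalarsharp} via \eqref{eq:phisecondchi}) gives realizable gain. If $\chi\ge\sqrt\theta$, i.e.\ $\varrho\ge1>c_n$ for every $n$, then by \eqref{eq:minspec} every $\mu_{\min}(K_n)>0$, so $H_n(e)>0$ by congruence and no negative mode exists.

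The only point requiring care is the floor bookkeeping at integer values of $\pi/\alpha$, where the strict inequality is exactly what makes the floor, rather than the ceiling, correct.
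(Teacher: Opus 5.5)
Your proposal is correct and follows the paper's route: reduce via \eqref{eq:homogthreshold} to $\cos\frac{\pi}{n+1}>\varrho(e)$, invert the cosine to get $n+1>\pi/\arccos\varrho(e)$, take the least such integer, and read the remaining two cases off \eqref{eq:minspec}. Your extra bookkeeping (positivity of $\theta(e)$, monotonicity of the thresholds, the case where $\pi/\arccos\varrho$ is an integer, and $n_{\rm crit}\ge 2$) is accurate and makes explicit what the paper leaves implicit.
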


\begin{proof}
For $0\le\varrho<1$, condition \eqref{eq:homogthreshold} is
$$
\cos\frac{\pi}{n+1}>\varrho.
$$
Since cosine is decreasing on $(0,\pi)$, this is equivalent to
$$
\frac{\pi}{n+1}<\arccos\varrho,
$$
or $n+1>\pi/\arccos\varrho$.  The smallest integer $n$ satisfying the strict inequality is exactly \eqref{eq:ncrit}.  The remaining statements follow directly from \eqref{eq:minspec}.
\end{proof}

\subsection{An explicit dimension-induced GOE example}

We now show that the higher-dimensional mechanism is genuine and not an artifact of the formal second variation.

\begin{example}[Gain created by increasing the chain length]\label{ex:dimensiongain}
Consider
\begin{equation}
g_*(x)
:=x+\frac32x(1-x)(1-2x)
=3x^3-\frac92x^2+\frac52x.
\label{eq:gstar}
\end{equation}
Then
$$
g_*(0)=0,\qquad g_*(1)=1,
$$
and
$$
g_*'(x)=9\left(x-\frac12\right)^2+\frac14>0,
$$
so $g_*$ is admissible.

Choose the homogeneous equilibrium
$$
e=\frac13.
$$
A direct calculation gives
$$
g_*(e)=\frac49,
\qquad
1-g_*(e)=\frac59,
\qquad
g_*'(e)=\frac12,
\qquad
g_*''(e)=-3.
$$
Consequently
\begin{equation}
\theta(e)=\frac3{10},
\qquad
\chi(e)=\frac3{10}.
\label{eq:exampletheta}
\end{equation}
For $n=1$, $H_1$ is positive because $\chi(e)>0$.  For $n=2$,
$$
\mu_{\min}(K_2)
=\frac3{10}-\frac12\sqrt{\frac3{10}}>0,
$$
since $9/100>3/40$.  Thus no positive second-order direction exists for the one- or two-site homogeneous chain at this equilibrium.

For $n=3$, however,
$$
\mu_{\min}(K_3)
=\frac3{10}-\frac1{\sqrt2}\sqrt{\frac3{10}}
=\frac3{10}-\sqrt{\frac3{20}}<0,
$$
because $9/100<3/20$.  Hence Theorem~\ref{thm:realizability} produces positive GOE for a sufficiently small periodic perturbation of the three-site chain.

To make the mean control explicit, normalize $R_C=1$.  The equilibrium $e_1=e_2=e_3=1/3$ is generated by
\begin{equation}
\lambda_0=\frac95,
\qquad
\lambda_1=\lambda_2=\frac{27}{5},
\qquad
\lambda_3=3,
\label{eq:examplemeans}
\end{equation}
because
$$
\lambda_0[1-g_*(e)]
=\lambda_1e[1-g_*(e)]
=\lambda_2e[1-g_*(e)]
=\lambda_3e=1.
$$
The negative eigenvector of $K_3$ corresponding to its smallest eigenvalue is proportional to
$$
z=(1,\sqrt2,1)^T.
$$
Since $H_3$ and $K_3$ are congruent, an associated state direction is
$$
\xi=P^{-1/2}z.
$$
Taking
$$
y(t)=\diag(e,e,e)\xi\cos\frac{2\pi t}{T}
$$
and using the right-inverse construction \eqref{eq:controlrightinverse} gives a zero-mean periodic rate perturbation $v$.  For sufficiently small $\varepsilon$, the rates
$$
u_i^\varepsilon(t)=\lambda_i+\varepsilon v_i(t)
$$
remain positive, preserve the means \eqref{eq:examplemeans}, and satisfy
$$
\GOE(u^\varepsilon)>0.
$$
Thus increasing the chain from two sites to three creates a realizable gain mode.  We call this phenomenon \emph{dimension-induced gain of entrainment}.
\end{example}

\begin{remark}
The example highlights a distinction between local scalar curvature and spatial coupling.  At $e=1/3$, the scalar curvature quantity $\chi(e)$ is positive.  Nevertheless, in a long enough chain the cumulative negative off-diagonal coupling in the Jacobi operator overcomes the positive diagonal potential.  The resulting gain is therefore created by the transport architecture, not by a negative scalar curvature at an individual site.
\end{remark}

\section{Discussion and open problems}\label{sec:discussion}

The classical RFM has an unusually rigid GOE structure. Massas and Margaliot \cite{MassasMargaliot2026Cost} show that periodic modulation of the linear-hindrance RFM can never improve average production and that equality occurs only under a common scalar modulation of all rates.  The exact dissipation identity in Theorem~\ref{thm:dissipation} shows that this rigidity is not an isolated algebraic coincidence: it persists for a substantial nonlinear class, but the correct structural condition is a curvature condition on the transformed free-space profile
$$
W(x)=[1-g(x)]^{-1/2}.
$$
When $W$ is convex, every local edge dissipation is nonnegative.  In this regime periodic coordination has an unavoidable production cost, exactly as in the classical RFM.

The quadratic hindrance $1-x^2$ belongs to this global rigidity class.  This is significant for two reasons.  First, the model is genuinely nonlinear in the blocking variable and is not bilinear in the state.  Second, the conclusion is completely uniform: dimension, period, amplitude, and spatial placement of the forcing do not matter.  Thus the no-GOE phenomenon is robust under at least one natural nonlinear deformation of the exclusion law, and in fact under the larger family $1-x^p$, $p\ge1$.

On the other hand, monotonicity of $g$ is not enough.  The scalar classification in Theorem~\ref{thm:scalarsharp} identifies $\Phi=x/(1-g)$ as the sharp one-dimensional object.  If $\Phi$ loses convexity, positive GOE can be created by an explicit periodic orbit/control construction.  This has a close conceptual relation to recent curvature-based GOE results for linear systems with nonlinear outputs \cite{MassasMargaliot2026Linear}, but here the nonlinearity lies inside the transport law itself.  The higher-dimensional analysis then reveals a second mechanism that cannot be seen in a scalar Jensen argument: nearest-neighbor coupling creates a Jacobi operator, and its negative modes may appear only after the chain becomes sufficiently long.

There is also a useful comparison with the nonlinear-cascade analysis of Massas and Margaliot \cite{MassasMargaliot2026Cascade}, where GOE decomposes into weighted local Jensen gaps.  The RFM is not a feedforward cascade of static nonlinearities: conservation couples adjacent sites dynamically in both the state equation and the periodic balance.  This produces the Riccati/Jacobi structure in Sections~\ref{sec:jacobi}--\ref{sec:riccati}.  The resulting dimension threshold is therefore a specifically transport-induced effect.

It is useful to compare the preceding results with several recent developments on GOE for the RFM and related systems.  The development of GOE results for the RFM may be viewed in three stages.  The first stage was structural.  Ofir et al.~\cite{OfirEtAl2023} proved absence of gain for several families of periodically forced classical RFMs by deriving identities for centered periodic moments.  These results demonstrated substantial rigidity, but the closure of the moment identities depended strongly on the linear exclusion law.  This is precisely the point at which a direct extension to $1-g(x)$ becomes ineffective: nonlinear exclusion produces higher-order remainders that do not close under the same finite moment scheme.

The second stage was local and control-theoretic.  Katz et al.~\cite{KatzEtAl2024} considered weakly contractive bilinear systems and proved that the first-order GOE vanishes.  In this sense the performance effect of a small periodic perturbation is intrinsically second order.  Theorem~\ref{thm:jacobi} is consistent with that principle but identifies a much more specific second-order object for nonlinear RFM transport: a Jacobi form with a scalar curvature potential and nearest-neighbor couplings.  The realizability argument in Theorem~\ref{thm:realizability} further shows that negative modes of this form correspond to actual zero-mean rate perturbations rather than merely formal variations of the state.

The third stage is the recent global result of Massas and Margaliot \cite{MassasMargaliot2026Cost}.  For the classical RFM they prove $R_P\le R_C$ for arbitrary positive periodic rates and characterize equality by a common scalar modulation.  Our Theorem~\ref{thm:dissipation} is deliberately formulated so that, at $g(x)=x$, both the weights and edge dissipation reduce to that classical identity.  The main new issue is what happens away from $g(x)=x$.  The answer is not a universal extension of the classical theorem.  The nonlinear Taylor remainder carries curvature, and that curvature has two qualitatively different consequences: it yields the global rigidity class in Theorem~\ref{thm:globalnogoe}, but it can also produce positive gain, first through loss of scalar convexity and then, more subtly, through a negative spatial Jacobi mode.

Massas and Margaliot \cite{MassasMargaliot2026Linear} show that, for stable linear state dynamics with a nonlinear output, curvature controls the leading GOE and yields a Bregman-divergence interpretation.  In a separate study of nonlinear feedforward cascades, Massas and Margaliot \cite{MassasMargaliot2026Cascade} decompose the gain into downstream-weighted local Jensen gaps.  Our scalar theorem shares the Jensen/curvature mechanism, whereas the multi-site RFM introduces a conservative transport coupling that has no scalar analogue.  The resulting Riccati recurrence and critical dimension are therefore not inherited from either framework.

The biological interpretation of these distinctions should also be kept separate from mathematical optimality.  A negative GOE does not mean that periodic regulation is biologically useless: temporal coordination may be required to match protein supply to periodic demand.  Rather, the no-GOE theorem quantifies a throughput cost of such coordination under a fixed mean-rate budget.  Conversely, when a nonlinear hindrance admits positive GOE, the gain is a consequence of how time-dependent occupancy reshapes effective free space.  The dimension-induced example shows that this effect may be collective: no individual site needs to possess a locally gain-producing scalar curvature, yet the chain can support a gain-producing spatial mode.

The results above also narrow the remaining questions and provide concrete structures with which to approach them.  The global theorem shows that convexity of $W=[1-g]^{-1/2}$ is a robust sufficient condition for no-GOE at arbitrary amplitude, whereas the scalar theorem and the Riccati criterion identify explicit mechanisms by which this rigidity can fail.  A natural next problem is therefore to determine whether arbitrary-amplitude no-GOE in dimensions $n\ge2$ admits a sharp characterization that interpolates between the exact scalar criterion and the local Jacobi--Riccati test; the exact edge-dissipation identity obtained here provides a plausible starting point for such a nonlinear reduction.  When positive GOE is possible, the negative Jacobi mode and its realizability already identify the optimal infinitesimal direction, suggesting a systematic route toward finite-amplitude optimization under amplitude, energy, or rate-budget constraints, in connection with the weakly contractive control framework of \cite{KatzEtAl2024} and recent resource-allocation questions \cite{MassasEtAl2026Turnpike}.  It is also natural to replace common-period forcing by non-common periodic, almost periodic, or more general recurrent rates; the persistence and tridiagonal structure established here indicate that the curvature and Riccati mechanisms should remain meaningful after an appropriate long-time averaging formulation.  Finally, for networks of RFMs coupled through finite ribosome pools \cite{RavehEtAl2016,JainEtAl2022,KatzEtAl2022}, the present edge-dissipation identity suggests a way to separate local nonlinear blockage from topology-dependent coupling, raising the possibility of network analogues of the dimension-induced gain found here.  Thus the present theory does more than leave these questions open: it isolates the quantities--edge dissipation, scalar curvature, Jacobi coupling, and Riccati pivots--that a sharper global or network theory will have to control.

In summary, nonlinear steric hindrance changes the GOE problem qualitatively.  The classical global no-GOE theorem is recovered when $g(x)=x$ and persists under the global curvature condition \eqref{eq:curvaturecondition}, including the important model $g(x)=x^2$.  Outside this rigidity region, however, positive GOE is possible, and higher-dimensional transport can create gain even when the local scalar curvature is favorable.  The exact dissipation identity, the scalar convexity classification, and the Jacobi--Riccati reduction provide a unified framework for distinguishing these regimes.

\section*{Acknowledgements}
The author would like to thank Professor Yi Wang  and Linzhen Wu  in University of Science and Technology of China for their helpful discussions and valuable suggestions on this work. This paper is supported by the National Natural Science Foundation of China (No.~12517710, No.~12331006), the Strategic Priority Research Program of CAS (No.~XDB0900100), and the National Key R\&D Program of China (No.~2024YFA1013603, 2024YFA1013600).

\end{document}